\documentclass[12pt]{amsart}
\usepackage{amsmath,amsthm,amsfonts,amssymb,amscd,euscript}
\usepackage{color}

\newlength{\defbaselineskip}
\setlength{\defbaselineskip}{\baselineskip}


\setlength{\parindent}{.3 in} \setlength{\textwidth}{6.5 in}
\setlength{\topmargin} {-.2 in} \setlength{\evensidemargin}{0 in}
\setlength{\oddsidemargin}{0 in} \setlength{\footskip}{.3 in}
\setlength{\headheight}{.3 in} \setlength{\textheight}{8.7 in}
\setlength{\parskip}{.1 in}

\theoremstyle{plain}

\theoremstyle{definition}

\theoremstyle{plain}
\newtheorem{thm}{Theorem}

\newtheorem{lem}[thm]{Lemma}
\newtheorem{cor}[thm]{Corollary}

\theoremstyle{definition}
\newtheorem{defn}[thm]{Definition}

\newtheorem{exmp}[thm]{Example}

\newtheorem{rem}[thm]{Remark}
\numberwithin{equation}{section}

\newcommand{\bigboxs}[1]
{ \multiput(#1)(80,0){2}
 {\line(0,80){80}}
\multiput(#1)(0,80){2}
 {\line(80,0){80}}
}
\newcommand{\boxs}[1]
{ \multiput(#1)(20,0){2}
 {\line(0,20){20}}
\multiput(#1)(0,20){2}
 {\line(20,0){20}}
}

\begin{document}

\title{A combinatorial formula for rank 2 cluster variables}
\author{Kyungyong Lee and Ralf Schiffler}
\thanks{Research of K.L. is partially supported by NSF grant DMS 0901367. Research of R.S. is partially supported by NSF grant DMS 1001637. }

\address{Department of Mathematics, Wayne State University, Detroit, MI 48202}
\email{{\tt klee@math.wayne.edu}}
\address{Department of Mathematics, University of Connecticut, Storrs, CT 06269}
\email{{\tt schiffler@math.uconn.edu}}

\begin{abstract} Let $r$ be any positive integer, and let  $x_1, x_2$ be indeterminates. We consider the sequence $\{x_n\}$ defined by the recursive relation $$
x_{n+1} =(x_n^r +1)/{x_{n-1}}
$$
for any integer $n$.
Finding a combinatorial expression for $x_n$ as a rational function of $x_1$ and $x_2$ has been an open problem since 2001.  We give a direct elementary formula for $x_n$ in terms of subpaths of a specific lattice path in the plane. The formula is manifestly positive, providing a new proof of a result by Nakajima and Qin.

\end{abstract}

 \maketitle
 

\textbf{Keywords :} Laurent polynomials, Cluster algebras, Lattice paths.


\section{introduction}
 Let $r$ be any positive integer, and  $x_1, x_2$ be indeterminates. We consider the sequence $\{x_n\}$ defined by the recursive relation $$
x_{n+1} =(x_n^r +1)/{x_{n-1}}
$$
for any integer $n$. These seemingly elementary sequences have turned out to be stubbornly hard to analyze.
 In the case $r=1$, surprisingly, we obtain a periodic sequence of Laurent polynomials of $x_1$ and $x_2$:
$$
x_3=\frac{x_2+1}{x_1}, \,\, x_4=\frac{x_1+x_2+1}{x_1x_2}, \,\, x_5=\frac{x_1+1}{x_2}, \,\, x_6=x_1, \,\, x_7=x_2,...
$$
In all other cases $r>1$, the sequence is without repetition.
It is an important problem in the theory of cluster algebras to understand the sequence for an arbitrary positive integer $r$, because each sequence forms the set of cluster variables of a rank two cluster algebra.

Cluster algebras have been introduced by Fomin and Zelevinsky in \cite{FZ} in the context of total positivity and canonical bases in Lie theory. Since then cluster algebras have been shown to be related to various fields in mathematics including representation theory of finite dimensional algebras, Teichm\"uller theory, Poisson geometry, combinatorics, Lie theory, tropical geometry and mathematical physics.

A cluster algebra is a subalgebra of a field of rational functions in $n$ variables $x_1,x_2,\ldots,x_n$, given by specifying a set of generators, the so-called \emph{cluster variables}. These generators are constructed in a recursive way, starting from the initial variables $x_1,x_2,\ldots,x_n$, by a procedure called \emph{mutation}, which is determined by the choice of a skew symmetric $n\times n$ integer matrix $B$.
Although each mutation is an elementary operation, it is very difficult to compute cluster variables in general, because of the recursive character of the construction.

Finding explicit computable direct formulas for the cluster variables is one of the main open problems in the theory of cluster algebras and has been studied by many mathematicians. Fomin and Zelevinsky showed in \cite{FZ} that every cluster variable is a Laurent polynomial in the initial variables $x_1,x_2,\ldots, x_n$, and they conjectured that this Laurent polynomial has positive coefficients. This \emph{positivity conjecture} has been proved in several special cases; the most general ones being  cluster algebras from surfaces, see \cite{MSW}, cluster algebras that have a bipartite seed, see \cite{N,HL} and cluster algebras whose initial seed is acyclic \cite{Q}.

Direct formulas for the Laurent polynomials have been obtained in several special cases. The most general results are the following:
\begin{itemize}
\item a formula involving the Euler-Poincar\'e characteristic of quiver Grassmannians obtained in \cite{FK} using categorification and generalizing results in \cite{CC,CK1}. While this formula shows a very interesting connection between cluster algebras and geometry, it is of limited computational use, since the Euler-Poincar\'e characteristics of quiver Grassmannians are hard to compute.
\item an elementary combinatorial formula for cluster algebras from surfaces given in \cite{MSW} building on  \cite{S2,ST,S3}.
\item a formula for cluster variables corresponding to string modules as a product of $2\times 2$ matrices obtained in \cite{ADSS}, generalizing a result in \cite{ARS}.
\end{itemize}

In this paper, we consider cluster algebras of rank $n=2$, that is, the integer matrix $B$ is of the form
\[\left[\begin{array}{cc} 0&r\\-r&0\end{array}\right],\] and the cluster variables form the sequence $\{x_n\}$ given by the recursion above.
The rank 2 case is considerably simpler than the general case, but even so, the problem of finding an elementary formula for the cluster variables which also shows positivity had not been solved yet. In \cite{SZ, MP, CZ, DK} such formulas are given in the case $r=2$, which is also a special case of the ones considered in \cite{MSW,ST,ADSS}. In \cite{N,Q} the positivity conjecture was proved for arbitrary $r$ using Euler-Poincar\'e characteristics,
and  in \cite{L} an elementary formula is proved but it does not show positivity. We also want to point out that the positivity conjecture has been shown in the case where the $2\times 2$ matrix $B$ is skew-symmetrizable in \cite{D}, again using Euler-Poincar\'e characteristics.

The main result of this paper is a direct combinatorial formula which computes the cluster variables as a sum of monomials  each with coefficient 1. Clearly, this formula shows positivity. Moreover, each monomial is explicitly given, and the sum is parametrized by subpaths of a specific lattice path in the plane.

\noindent \emph{Acknowledgements.}  We are grateful to Andrei Zelevinsky and Hugh Thomas for valuable suggestions. The first author would like to thank Philippe Di Francesco, Sergey Fomin, Rob Lazarsfeld, and Gregg Musiker for encouraging him to work on this project. The computational part of our research was aided by the commutative algebra package Macaulay 2 \cite{M2}.

\section{Main Result}

Fix a positive integer $r\geq 2$. 

\begin{defn}\label{cn}
Let $\{c_n\}$ be the sequence  defined by the recurrence relation $$c_n=rc_{n-1} -c_{n-2},$$ with the initial condition $c_1=0$, $c_2=1$. When $r=2$, $c_n=n-1$. When $r>2$, it is easy to see that 
$$\aligned
c_n&= \frac{1}{\sqrt{r^2-4}  }\left(\frac{r+\sqrt{r^2-4}}{2}\right)^{n-1} - \frac{1}{\sqrt{r^2-4}  }\left(\frac{r-\sqrt{r^2-4}}{2}\right)^{n-1}\\ &= \sum_{i\geq 0} (-1)^i { {n-2-i} \choose i }r^{n-2-2i}.
\endaligned$$ For example, for $r=3$, the sequence $c_n$ takes the following values:
$$0,1,3,8,21,55,144,...$$\qed
\end{defn}

In order to state our theorem, we fix an integer $n\geq 4$. Consider a rectangle with vertices $(0,0),(0,c_{n-2}),(c_{n-1}-c_{n-2},c_{n-2})$ and $(c_{n-1}-c_{n-2},0)$. In what follows, by the diagonal we mean the line segment from $(0,0)$ to $(c_{n-1}-c_{n-2},c_{n-2})$. A Dyck path is a lattice path
from $(0, 0)$  to $(c_{n-1}-c_{n-2},c_{n-2})$ that proceeds by NORTH or EAST steps and
never goes above the diagonal.

\begin{defn}
A Dyck path below the diagonal is said to be maximal if no subpath of any other Dyck path lies above it. The maximal Dyck path, denoted by $\mathcal{D}_n$, consists of $(w_0, \alpha_1,w_1,\cdots, \alpha_{c_{n-1}}, w_{c_{n-1}})$, where $w_0,\cdots,w_{c_{n-1}}$ are vertices and $\alpha_1,\cdots, \alpha_{c_{n-1}}$ are edges, such that $w_0=(0,0)$ is the south-west corner of the rectangle, $\alpha_i $ connects $w_{i-1}$ and $w_i$, and $w_{c_{n-1}}=(c_{n-1}-c_{n-2},c_{n-2})$ is the north-east corner of the rectangle.
\end{defn}

\begin{rem}
The word obtained from $\mathcal{D}_n$ by forgetting the vertices $w_i$ and replacing each horizontal edge by the letter $x$ and each vertical edge by the letter $y$ is the Christoffel word of slope $c_{n-2}/(c_{n-1}-c_{n-2})$.
\end{rem}

\begin{exmp}Let $r=3$ and $n=5$. Then $\mathcal{D}_5$ is illustrated as follows.
$$\hspace{16pt} \begin{picture}(400,250)
\bigboxs{0,0}\bigboxs{80,0}\bigboxs{160,0}\bigboxs{240,0}\bigboxs{320,0}
\bigboxs{0,80}\bigboxs{80,80}\bigboxs{160,80}\bigboxs{240,80}\bigboxs{320,80}
\bigboxs{0,160}\bigboxs{80,160}\bigboxs{160,160}\bigboxs{240,160}\bigboxs{320,160}
\put(28,-12){$\tiny{\alpha_1}$}\put(108,-12){$\tiny{\alpha_2}$}
\put(201,68){$\tiny{\alpha_4}$}\put(276,68){$\tiny{\alpha_5}$}
\put(358,148){$\tiny{\alpha_7}$}
\put(142,36){$\tiny{\alpha_3}$}\put(302,116){$\tiny{\alpha_6}$}\put(382,196){$\tiny{\alpha_8}$}
\linethickness{1pt}\put(0,0){\line(5,3){400}}
\linethickness{6pt}\put(0,0){\line(1,0){160}}
\linethickness{6pt}\put(160,0){\line(0,1){80}}
\linethickness{6pt}\put(160,80){\line(1,0){160}}
\linethickness{6pt}\put(320,80){\line(0,1){80}}
\linethickness{6pt}\put(320,160){\line(1,0){80}}
\linethickness{6pt}\put(400,160){\line(0,1){80}}
\put(0,0){\circle*{12}}\put(160,80){\circle*{12}}\put(320,160){\circle*{12}}\put(400,240){\circle*{12}}\put(-18,0){$v_0$}\put(164,87){$v_1$}\put(308,169){$v_2$}\put(408,240){$v_3$}
\end{picture}$$
\end{exmp}

\bigskip
\begin{defn}
Let $i_1<\cdots<i_{c_{n-2}}$ be the sequence of integers such that $\alpha_{i_j}$ is vertical for any $1\leq j\leq c_{n-2}$. Define a sequence $v_0,v_1,\cdots,v_{c_{n-2}}$ of vertices by $v_0=(0,0)$ and $v_j=w_{i_j}$. 
\end{defn}

\begin{defn}
For any $i<j$, let $s_{i,j}$ be the slope of the line through $v_i$ and $v_j$. Let $s$ be the slope of the diagonal, that is, $s=s_{0,c_{n-2}}$. 
\end{defn}

\begin{defn}\label{alpha(i,k)}
For any $0\leq i<k\leq c_{n-2}$, let $\alpha(i,k)$ be the subpath of $\mathcal{D}_n$ defined as follows (for illustrations see Example~\ref{mainexmp}).

\noindent (1) If $s_{i,t}\leq s$ for all $t$ such that $i<t\leq k$, then let $\alpha(i,k)$ be the subpath from $v_i$ to $v_k$. Each of these subpaths will be called a BLUE subpath. See Example~\ref{mainexmp}.

\noindent (2) If $s_{i,t}> s$ for some $i<t\leq k$, then

(2-a) if the smallest such $t$ is of the form $i+c_m-wc_{m-1}$ for some integers $3\le m\le n-2$ and $1\leq w< r-1$, then let $\alpha(i,k)$ be the subpath from $v_i$ to $v_k$. Each of these subpaths will be called a GREEN subpath. When $m$ and $w$ are specified, it will be said to be $(m,w)$-green.

(2-b) otherwise, let $\alpha(i,k)$ be the subpath from the immediate predecessor of $v_i$ to $v_k$. Each of these subpaths will be called a RED subpath.
\qed\end{defn}

Note that every pair $(i,k)$ defines exactly one subpath $\alpha(i,k)$. We denote the set of all these subpaths together with the single edges $\alpha_i$ by $\mathcal{P}(\mathcal{D}_n)$, that is,$$\mathcal{P}(\mathcal{D}_n)=\{\alpha(i,k)\,|\, 0\leq i<k\leq c_{n-2}\} \cup \{\alpha_1,\cdots,\alpha_{c_{n-1}} \}.$$

Now we define a set $\mathcal{F}(\mathcal{D}_n)$ of certain sequences of non-overlapping subpaths of $\mathcal{D}_n$. This set will parametrize the monomials in our expansion formula.

\begin{defn}
Let $$\aligned \mathcal{F}(\mathcal{D}_n)=\{\{\beta_1,\cdots,\beta_t\}\,|&\,t\geq 0,\,\beta_j\in \mathcal{P}(\mathcal{D}_n)\text{ for all }1\leq j\leq t,\,\\ &\text{ if }j\neq j'\text{ then }\beta_j\text{ and }\beta_{j'}\text{ have no common edge,}\\ &\text{ if }\beta_j=\alpha(i,k)\text{ and }\beta_{j'}=\alpha(i',k')\text{ then }i\neq k'\text{ and }i'\neq k,\\
&\text{ and if }\beta_j\text{ is }(m,w)\text{-green then at least one of the }(c_{m-1}-wc_{m-2})\\&\,\,\,\,\,\,\,\,\,\,\,\,\,\,\text{ preceding edges of }v_i\text{ is contained in some }\beta_{j'}\}.  \endaligned$$ \qed
\end{defn}

For any $\mathbf{\beta}=\{\beta_1,\cdots,\beta_t\}$, let $|\mathbf{\beta}|_2$ be the total number of edges in $\beta_1,\cdots,\beta_t$, and $|\mathbf{\beta}|_1=\sum_{j=1}^t |\beta_j|_1$, where
$$
|\beta_j|_1=\left\{\begin{array}{ll} 0, &\text{ if }\beta_j=\alpha_i\text{ for some }1\leq i\leq c_{n-1} \\k-i, &\text{ if }\beta_j=\alpha(i,k)\text{ for some }0\leq i<k\leq c_{n-2}.  \end{array}   \right.
$$

\begin{thm}\label{mainthm}
Let $r\geq 2$ be a positive integer. Let $x_1, x_2$ be indeterminates. Define the sequence $\{x_n\}$ by $$
x_{n+1} =(x_n^r +1)/{x_{n-1}} \text{ for any integer }n.
$$ 
Then for $n\geq 4$,
\begin{equation}\label{maineq1}x_n=x_1^{-c_{n-1}} x_2^{-c_{n-2}}\sum_{\mathbf{\beta}\in\mathcal{F}(\mathcal{D}_n)}x_1^{r|\mathbf{\beta}|_1}x_2^{r(c_{n-1}-|\mathbf{\beta}|_2)}\end{equation}and
\begin{equation}\label{maineq2}x_{3-n}=x_2^{-c_{n-1}} x_1^{-c_{n-2}}\sum_{\mathbf{\beta}\in\mathcal{F}(\mathcal{D}_n)}x_2^{r|\mathbf{\beta}|_1}x_1^{r(c_{n-1}-|\mathbf{\beta}|_2)}.\end{equation}
\end{thm}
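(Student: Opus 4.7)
The symmetry $x_{n+1}x_{n-1}=x_n^r+1$ is invariant under the involution $(n,x_1,x_2)\mapsto(3-n,x_2,x_1)$, and this involution reverses the roles of horizontal/vertical edges in $\mathcal{D}_n$ in a manner compatible with the BLUE/GREEN/RED dichotomy. So \eqref{maineq2} should follow formally from \eqref{maineq1}, and the real content is \eqref{maineq1}. I would attack \eqref{maineq1} by induction on $n$, with base case $n=4$. When $n=4$, the rectangle is $(r-1)\times 1$, so $\mathcal{D}_4$ has $c_3=r$ edges with a single vertical edge, and $\mathcal{F}(\mathcal{D}_4)$ can be enumerated by hand. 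A direct comparison with the binomial expansion of $x_4=x_1^{-r}x_2^{-1}\bigl((x_2^r+1)^r+x_1^r\bigr)$ verifies the formula.

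For the inductive step I would use the recursion in the form $x_{n+1}\,x_{n-1}=x_n^{\,r}+1$, assume \eqref{maineq1} for $x_{n-1}$ and $x_n$, and aim to derive it for $x_{n+1}$. The prefactor exponents match because $c_n=rc_{n-1}-c_{n-2}$ forces
\[
-c_n-c_{n-2}\;=\;r(-c_{n-1})-\bigl(-c_{n-1}-c_{n-2}\bigr),\qquad -c_{n-1}-c_{n-3}\;=\;r(-c_{n-2})-\bigl(-c_{n-1}-c_{n-2}\bigr),
\]
so the identity reduces, after clearing the common prefactor, to a purely combinatorial statement: there is an explicit bijection
\[
\Phi\,:\,\bigl(\mathcal{F}(\mathcal{D}_{n+1})\times \mathcal{F}(\mathcal{D}_{n-1})\bigr)\setminus\{(\ast,\ast)\}\;\longrightarrow\;\mathcal{F}(\mathcal{D}_n)^{\,r}\setminus\{(\ast,\ldots,\ast)\}
\]
that matches the monomial weights $x_1^{r|\cdot|_1}x_2^{r(c_\bullet-|\cdot|_2)}$ on each side, with the excluded pair/tuple providing the ``$+1$.''

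Constructing $\Phi$ is the main obstacle and the technical heart of the paper. The plan is to exploit the nested structure of the Dyck paths dictated by $c_n=rc_{n-1}-c_{n-2}$: $\mathcal{D}_{n+1}$ decomposes into $r$ blocks each carrying essentially the combinatorics of $\mathcal{D}_n$, separated by transitional segments whose structure mirrors $\mathcal{D}_{n-1}$. Given a family $\mathbf{\beta}\in\mathcal{F}(\mathcal{D}_{n+1})$, one would cut each subpath at the block boundaries, obtaining $r$ pieces in $\mathcal{D}_n$ together with an element of $\mathcal{F}(\mathcal{D}_{n-1})$ recording how the pieces fit across boundaries. The delicate point is showing that the local rules BLUE/GREEN/RED, together with the precise condition ``at least one of the $c_{m-1}-wc_{m-2}$ preceding edges of $v_i$ is contained in some $\beta_{j'}$'' for $(m,w)$-green subpaths, are exactly what is needed so that the cutting-and-pasting is a bijection on the nose: the green precedence constraint is precisely the combinatorial shadow of the recursion $c_m-wc_{m-1}$ appearing in the self-similar structure of $\mathcal{D}_{n+1}$. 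The single excluded pair on each side, corresponding to the constant $1$, should be the ``empty'' family on one side and the unique $r$-tuple covering all edges of $\mathcal{D}_n$ on the other (or a similarly canonical pair, isolated by the block-boundary analysis).
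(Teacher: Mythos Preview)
Your approach diverges fundamentally from the paper's, and the central structural claim underlying your bijection is incorrect.

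\textbf{The decomposition is wrong.} You assert that $\mathcal{D}_{n+1}$ decomposes into $r$ blocks each carrying the combinatorics of $\mathcal{D}_n$, separated by $\mathcal{D}_{n-1}$-like transitional segments. But $\mathcal{D}_{n+1}$ has $c_n=rc_{n-1}-c_{n-2}$ edges, while $r$ copies of $\mathcal{D}_n$ would already have $rc_{n-1}$ edges; there is no room for additional transitional segments, and in fact the minus sign in $c_n=rc_{n-1}-c_{n-2}$ obstructs any such ``$r$-block'' picture. The actual self-similarity (which the paper exploits via the map $f$ in Definition~\ref{def_of_f}) is that $\mathcal{D}_{n+1}$ is a \emph{single} inflation of $\mathcal{D}_n$: each horizontal edge of $\mathcal{D}_n$ is replaced by $r$ edges and each vertical edge by $r-1$ edges. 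This one-to-one substitution relates $\mathcal{F}(\mathcal{D}_n)$ to $\mathcal{F}(\mathcal{D}_{n+1})$, not to $\mathcal{F}(\mathcal{D}_n)^r$. Consequently your cutting-and-pasting scheme has no geometric substrate, and your proposed bijection $\Phi$ is not defined. (Your displayed prefactor identities are also garbled: the correct check is simply $c_n+c_{n-2}=rc_{n-1}$, without the extra $-(-c_{n-1}-c_{n-2})$ terms.)

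\textbf{What the paper actually does.} The paper avoids the product recursion $x_{n+1}x_{n-1}=x_n^r+1$ entirely. Instead it uses the field automorphism $F:x_1\mapsto x_2,\ x_2\mapsto(1+x_2^r)/x_1$, for which $F(x_n)=x_{n+1}$, and compares $F$ applied to the candidate formula at level $n$ with the candidate formula at level $n+1$. To make this comparison tractable, the paper introduces a relaxed set $\widetilde{\mathcal{F}}(\mathcal{D}_n)\supset\mathcal{F}(\mathcal{D}_n)$ (dropping the green precedence condition) and error sets $\mathcal{T}^{\ge u}(\mathcal{D}_n)$ indexing the violations. Lemma~\ref{20110411lem1} computes $F(z_n)$ by expanding $(1+x_2^r)^{\cdots}$ binomially and resumming via Chu--Vandermonde together with the edge-count identity of Lemma~\ref{thenumofelem}; the discrepancy with $z_{n+1}$ is exactly the $\mathcal{T}^{\ge 3}\setminus\mathcal{T}^{\ge 4}$ contribution. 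Lemma~\ref{20110411lem2} then shows that $F$ shifts these error strata, $\mathcal{T}^{\ge u}\setminus\mathcal{T}^{\ge u+1}\mapsto\mathcal{T}^{\ge u+1}\setminus\mathcal{T}^{\ge u+2}$, by recognising each stratum as a polynomial in earlier cluster variables. The green precedence condition is engineered precisely so that these error terms telescope away under $F$ (Lemma~\ref{20110411lem3}); it is not, as you suggest, a local constraint designed to make a block-level cut-and-paste bijective.

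In short, your base case and the reduction of \eqref{maineq2} to \eqref{maineq1} are fine, but the inductive step rests on a nonexistent decomposition and an unconstructed bijection; the paper's proof proceeds along a completely different route.
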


Here $x_n$ $(n\in\mathbb{Z})$ are called cluster variables, and the cluster algebra of rank 2 is the $\mathbb{Q}$-subalgebra generated by all cluster variables $x_n$ in the field of rational functions in the commutative variables $x_1$ and $x_2$.

\begin{exmp}\label{mainexmp} Let $r=3$ and $n=5$. If the edge $\alpha_i$ is marked $\begin{picture}(25,10)\linethickness{3pt}\put(2,2){\line(1,0){20}}\color{white}\put(9,2){\line(1,0){6}} \end{picture}$, then $\alpha_i$ can occur in $\beta$.
$$
\begin{array}{cc}
\hspace{16pt} \begin{picture}(100,60)
\boxs{0,0}\boxs{20,0}\boxs{40,0}\boxs{60,0}\boxs{80,0}
\boxs{0,20}\boxs{20,20}\boxs{40,20}\boxs{60,20}\boxs{80,20}
\boxs{0,40}\boxs{20,40}\boxs{40,40}\boxs{60,40}\boxs{80,40}
\linethickness{1pt}\put(0,0){\line(5,3){100}}
\linethickness{3pt}\put(0,0){\line(1,0){40}}
\linethickness{3pt}\put(40,0){\line(0,1){20}}
\linethickness{3pt}\put(40,20){\line(1,0){40}}
\linethickness{3pt}\put(80,20){\line(0,1){20}}
\linethickness{3pt}\put(80,40){\line(1,0){20}}
\linethickness{3pt}\put(100,40){\line(0,1){20}}\color{white}\put(47,20){\line(1,0){6}}\linethickness{3pt}\color{white}\put(67,20){\line(1,0){6}}\color{white}\put(7,0){\line(1,0){6}}\linethickness{3pt}\color{white}\put(27,0){\line(1,0){6}}\color{white}\put(87,40){\line(1,0){6}}\put(40,7){\line(0,1){6}}\put(80,27){\line(0,1){6}}\put(100,47){\line(0,1){6}}
\end{picture}
&\,\,\,\,\,\,\,\,\,\,\,\,\,\,\,\,\,\,\,\,\,\,\,\,\,\,\,\,\,\,\,\,\,\,\,\,\,\,\,\,\,\,\,\,\,\,\,\,\,\,\,\,\,\begin{picture}(300,60)\put(-80,35){$\tiny{\sum_{\begin{array}{l}\mathbf{\beta}\subset \{\alpha_1,\cdots,\alpha_{8} \}  \end{array}}x_1^{r|\mathbf{\beta}|_1}x_2^{r(c_{n-1}-|\mathbf{\beta}|_2)}}$}\put(-80,15){$=(1+x_2^3)^8$}\end{picture}
\\
\hspace{16pt} \begin{picture}(100,60)
\boxs{0,0}\boxs{20,0}\boxs{40,0}\boxs{60,0}\boxs{80,0}
\boxs{0,20}\boxs{20,20}\boxs{40,20}\boxs{60,20}\boxs{80,20}
\boxs{0,40}\boxs{20,40}\boxs{40,40}\boxs{60,40}\boxs{80,40}
\linethickness{1pt}\put(0,0){\line(5,3){100}}
\linethickness{3pt}\color{blue}\put(0,0){\line(1,0){40}}
\linethickness{3pt}\color{blue}\put(40,0){\line(0,1){20}}
\linethickness{3pt}\color{black}\put(40,20){\line(1,0){40}}
\linethickness{3pt}\put(80,20){\line(0,1){20}}
\linethickness{3pt}\put(80,40){\line(1,0){20}}
\linethickness{3pt}\put(100,40){\line(0,1){20}}\color{white}\put(47,20){\line(1,0){6}}\color{white}\put(67,20){\line(1,0){6}}\put(87,40){\line(1,0){6}}\put(80,27){\line(0,1){6}}\put(100,47){\line(0,1){6}}
\end{picture}
&\,\,\,\,\,\,\,\,\,\,\,\,\,\,\,\,\,\,\,\,\,\,\,\,\,\,\,\,\,\,\,\,\,\,\,\,\,\,\,\,\,\,\,\,\,\,\,\,\,\,\,\,\,\begin{picture}(300,60)\put(-80,35){$\tiny{\sum_{\begin{array}{l}\{\alpha(0,1)\}\subset\mathbf{\beta}\subset\{\alpha(0,1)\}\cup  \{\alpha_4,\cdots,\alpha_{8} \}  \end{array}}x_1^{r|\mathbf{\beta}|_1}x_2^{r(c_{n-1}-|\mathbf{\beta}|_2)}}$}\put(-80,15){$= x_1^3(1+x_2^3)^5$}\end{picture}\\
\hspace{16pt} \begin{picture}(100,60)
\boxs{0,0}\boxs{20,0}\boxs{40,0}\boxs{60,0}\boxs{80,0}
\boxs{0,20}\boxs{20,20}\boxs{40,20}\boxs{60,20}\boxs{80,20}
\boxs{0,40}\boxs{20,40}\boxs{40,40}\boxs{60,40}\boxs{80,40}
\linethickness{1pt}\put(0,0){\line(5,3){100}}
\linethickness{3pt}\color{blue}\put(0,0){\line(1,0){40}}
\linethickness{3pt}\color{blue}\put(40,0){\line(0,1){20}}
\linethickness{3pt}\put(40,20){\line(1,0){40}}
\linethickness{3pt}\put(80,20){\line(0,1){20}}
\linethickness{3pt}\color{black}\put(80,40){\line(1,0){20}}
\linethickness{3pt}\put(100,40){\line(0,1){20}}\color{white}\put(87,40){\line(1,0){6}}\put(100,47){\line(0,1){6}}
\end{picture}
&\,\,\,\,\,\,\,\,\,\,\,\,\,\,\,\,\,\,\,\,\,\,\,\,\,\,\,\,\,\,\,\,\,\,\,\,\,\,\,\,\,\,\,\,\,\,\,\,\,\,\,\,\,\begin{picture}(300,60)\put(-80,35){$\tiny{\sum_{\begin{array}{l}\{\alpha(0,2)\}\subset\mathbf{\beta}\subset\{\alpha(0,2)\}\cup  \{\alpha_7,\alpha_{8} \}  \end{array}}x_1^{r|\mathbf{\beta}|_1}x_2^{r(c_{n-1}-|\mathbf{\beta}|_2)}}$}\put(-80,15){$= x_1^6(1+x_2^3)^2$}\end{picture}\\
\hspace{16pt} \begin{picture}(100,60)
\boxs{0,0}\boxs{20,0}\boxs{40,0}\boxs{60,0}\boxs{80,0}
\boxs{0,20}\boxs{20,20}\boxs{40,20}\boxs{60,20}\boxs{80,20}
\boxs{0,40}\boxs{20,40}\boxs{40,40}\boxs{60,40}\boxs{80,40}
\linethickness{1pt}\put(0,0){\line(5,3){100}}
\linethickness{3pt}\color{blue}\put(0,0){\line(1,0){40}}
\linethickness{3pt}\color{blue}\put(40,0){\line(0,1){20}}
\linethickness{3pt}\put(40,20){\line(1,0){40}}
\linethickness{3pt}\put(80,20){\line(0,1){20}}
\linethickness{3pt}\put(80,40){\line(1,0){20}}
\linethickness{3pt}\put(100,40){\line(0,1){20}}
\end{picture}
&\,\,\,\,\,\,\,\,\,\,\,\,\,\,\,\,\,\,\,\,\,\,\,\,\,\,\,\,\,\,\,\,\,\,\,\,\,\,\,\,\,\,\,\,\,\,\,\,\,\,\,\,\,\begin{picture}(300,60)\put(-80,35){$\tiny{\sum_{\begin{array}{l}\mathbf{\beta}=\{\alpha(0,3)\} \end{array}}x_1^{r|\mathbf{\beta}|_1}x_2^{r(c_{n-1}-|\mathbf{\beta}|_2)}}$}\put(-80,15){$= x_1^9$}\end{picture}\\
\hspace{16pt} \begin{picture}(100,60)
\boxs{0,0}\boxs{20,0}\boxs{40,0}\boxs{60,0}\boxs{80,0}
\boxs{0,20}\boxs{20,20}\boxs{40,20}\boxs{60,20}\boxs{80,20}
\boxs{0,40}\boxs{20,40}\boxs{40,40}\boxs{60,40}\boxs{80,40}
\linethickness{1pt}\put(0,0){\line(5,3){100}}
\linethickness{3pt}\put(0,0){\line(1,0){40}}
\linethickness{3pt}\put(40,0){\line(0,1){20}}
\linethickness{3pt}\color{blue}\put(40,20){\line(1,0){40}}
\linethickness{3pt}\put(80,20){\line(0,1){20}}
\linethickness{3pt}\color{black}\put(80,40){\line(1,0){20}}
\linethickness{3pt}\put(100,40){\line(0,1){20}}\color{white}\put(87,40){\line(1,0){6}}\put(7,0){\line(1,0){6}}\put(27,0){\line(1,0){6}}\put(40,7){\line(0,1){6}}\put(100,47){\line(0,1){6}}
\end{picture}
&\,\,\,\,\,\,\,\,\,\,\,\,\,\,\,\,\,\,\,\,\,\,\,\,\,\,\,\,\,\,\,\,\,\,\,\,\,\,\,\,\,\,\,\,\,\,\,\,\,\,\,\,\,\begin{picture}(300,60)\put(-80,35){$\tiny{\sum_{\begin{array}{l}\{\alpha(1,2)\}\subset \mathbf{\beta}\subset\{\alpha(1,2)\}\cup  \{\alpha_1,\alpha_2,\alpha_{3},\alpha_{7},\alpha_{8} \}  \end{array}}x_1^{r|\mathbf{\beta}|_1}x_2^{r(c_{n-1}-|\mathbf{\beta}|_2)}}$}\put(-80,15){$= x_1^3(1+x_2^3)^5$}\end{picture}\\
\hspace{16pt} \begin{picture}(100,60)
\boxs{0,0}\boxs{20,0}\boxs{40,0}\boxs{60,0}\boxs{80,0}
\boxs{0,20}\boxs{20,20}\boxs{40,20}\boxs{60,20}\boxs{80,20}
\boxs{0,40}\boxs{20,40}\boxs{40,40}\boxs{60,40}\boxs{80,40}
\linethickness{1pt}\put(0,0){\line(5,3){100}}
\linethickness{3pt}\put(0,0){\line(1,0){40}}
\linethickness{3pt}\put(40,0){\line(0,1){20}}
\linethickness{3pt}\color{green}\put(40,20){\line(1,0){40}}
\linethickness{3pt}\put(80,20){\line(0,1){20}}
\linethickness{3pt}\put(80,40){\line(1,0){20}}
\linethickness{3pt}\put(100,40){\line(0,1){20}}\color{white}\put(7,0){\line(1,0){6}}\put(27,0){\line(1,0){6}}
\end{picture}
&\,\,\,\,\,\,\,\,\,\,\,\,\,\,\,\,\,\,\,\,\,\,\,\,\,\,\,\,\,\,\,\,\,\,\,\,\,\,\,\,\,\,\,\,\,\,\,\,\,\,\,\,\,\begin{picture}(300,60)\put(-80,35){$\tiny{\sum_{\begin{array}{l}\{\alpha(1,3)\}\cup  \{\alpha_3\}\subset\mathbf{\beta}\subset\{\alpha(1,3)\}\cup  \{\alpha_1,\alpha_2,\alpha_3 \}  \end{array}}x_1^{r|\mathbf{\beta}|_1}x_2^{r(c_{n-1}-|\mathbf{\beta}|_2)}}$}\put(-80,15){$= x_1^6(1+x_2^3)^2$}\end{picture}\\
\hspace{16pt} \begin{picture}(100,60)
\boxs{0,0}\boxs{20,0}\boxs{40,0}\boxs{60,0}\boxs{80,0}
\boxs{0,20}\boxs{20,20}\boxs{40,20}\boxs{60,20}\boxs{80,20}
\boxs{0,40}\boxs{20,40}\boxs{40,40}\boxs{60,40}\boxs{80,40}
\linethickness{1pt}\put(0,0){\line(5,3){100}}\linethickness{3pt}
\linethickness{3pt}\put(0,0){\line(1,0){40}}\color{white}\put(7,0){\line(1,0){6}}\linethickness{3pt}\color{white}\put(27,0){\line(1,0){6}}
\linethickness{3pt}\color{black}\put(40,0){\line(0,1){20}}
\linethickness{3pt}\put(40,20){\line(1,0){40}}\color{white}\put(47,20){\line(1,0){6}}\linethickness{3pt}\color{white}\put(67,20){\line(1,0){6}}
\linethickness{3pt}\color{red}\put(80,20){\line(0,1){20}}
\linethickness{3pt}\put(80,40){\line(1,0){20}}
\linethickness{3pt}\put(100,40){\line(0,1){20}}\color{white}\put(40,7){\line(0,1){6}}
\end{picture}
&\,\,\,\,\,\,\,\,\,\,\,\,\,\,\,\,\,\,\,\,\,\,\,\,\,\,\,\,\,\,\,\,\,\,\,\,\,\,\,\,\,\,\,\,\,\,\,\,\,\,\,\,\,\begin{picture}(300,60)\put(-80,35){$\tiny{\sum_{\begin{array}{l} \{\alpha(2,3)\}\subset\mathbf{\beta}\subset\{\alpha(2,3)\}\cup  \{\alpha_1,\cdots,\alpha_{5} \}  \end{array}}x_1^{r|\mathbf{\beta}|_1}x_2^{r(c_{n-1}-|\mathbf{\beta}|_2)}}$}\put(-80,15){$= x_1^3(1+x_2^3)^5$}\end{picture}\\
\hspace{16pt} \begin{picture}(100,60)
\boxs{0,0}\boxs{20,0}\boxs{40,0}\boxs{60,0}\boxs{80,0}
\boxs{0,20}\boxs{20,20}\boxs{40,20}\boxs{60,20}\boxs{80,20}
\boxs{0,40}\boxs{20,40}\boxs{40,40}\boxs{60,40}\boxs{80,40}
\linethickness{1pt}\put(0,0){\line(5,3){100}}
\linethickness{3pt}\color{blue}\put(0,0){\line(1,0){40}}
\linethickness{3pt}\put(40,0){\line(0,1){20}}
\linethickness{3pt}\color{black}\put(40,20){\line(1,0){40}}
\linethickness{3pt}\color{white}\put(47,20){\line(1,0){6}}\linethickness{3pt}\color{white}\put(67,20){\line(1,0){6}}
\linethickness{3pt}\color{red}\put(80,20){\line(0,1){20}}
\linethickness{3pt}\put(80,40){\line(1,0){20}}
\linethickness{3pt}\put(100,40){\line(0,1){20}}
\end{picture}
&\,\,\,\,\,\,\,\,\,\,\,\,\,\,\,\,\,\,\,\,\,\,\,\,\,\,\,\,\,\,\,\,\,\,\,\,\,\,\,\,\,\,\,\,\,\,\,\,\,\,\,\,\,\begin{picture}(300,60)\put(-80,35){$\tiny{\sum_{\begin{array}{l} \{\alpha(0,1),\alpha(2,3)\}\subset\mathbf{\beta}\subset\{\alpha(0,1),\alpha(2,3)\}\cup  \{\alpha_4,\alpha_{5} \}  \end{array}}x_1^{r|\mathbf{\beta}|_1}x_2^{r(c_{n-1}-|\mathbf{\beta}|_2)}}$}\put(-80,15){$= x_1^6(1+x_2^3)^2$}\end{picture}\end{array}
$$Adding the above 8 polynomials together gives 
\begin{equation}\label{8eqtoge}\aligned  &{x_2}^{24}+8 {x_2}^{21}+3 {x_1}^{3} {x_2}^{15}+28
      {x_2}^{18}+15 {x_1}^{3} {x_2}^{12}+56 {x_2}^{15}+3
      {x_1}^{6} {x_2}^{6}+30 {x_1}^{3} {x_2}^{9}+70
      {x_2}^{12}\\&+{x_1}^{9}+6 {x_1}^{6} {x_2}^{3}+30
      {x_1}^{3} {x_2}^{6}+56 {x_2}^{9}+3 {x_1}^{6}+15
      {x_1}^{3} {x_2}^{3}+28 {x_2}^{6}+3 {x_1}^{3}+8
      {x_2}^{3}+1.\endaligned\end{equation}
      Then $x_5$ is obtained by dividing (\ref{8eqtoge}) by $x_1^8 x_2^3$.

\end{exmp}

We also obtain the following formula for the $F$-polynomials. Let $g_\ell$ be the $g$-vector and let $F_\ell$ be the $F$-polynomial of $x_\ell$, for all integers $\ell$. 
{Then $g_3=(-1,r), \,g_0=(0,-1),\,F_3=y_1+1$ and $F_0=y_2+1$, and all other cases are described in the following result.}
\begin{cor} Let $n\ge 4$. Then
\[ \begin{array}{lrclcccrcl}
&g_n&=&(-c_{n-1},c_n)& \quad &,&\quad& g_{3-n}&=&(-c_{n-2},c_{n-3}) ,\\
and \quad \\
& F_n&=& \sum_{\mathbf{\beta}\in\mathcal{F}(\mathcal{D}_n)}y_1^{|\mathbf{\beta}|_2}y_2^{|\mathbf{\beta}|_1}&&,& &
F_{3-n}&=&\sum_{\mathbf{\beta}\in\mathcal{F}(\mathcal{D}_n)}y_1^{c_{n-2}-|\mathbf{\beta}|_1}y_2^{c_{n-1}-|\mathbf{\beta}|_2}.
\end{array}\]
\end{cor}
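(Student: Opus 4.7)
The plan is to deduce the corollary directly from Theorem~\ref{mainthm} by invoking the Fomin--Zelevinsky separation-of-additions formula for principal coefficients. Recall that for the rank-two cluster algebra with initial exchange matrix $B=\left[\begin{smallmatrix}0&r\\-r&0\end{smallmatrix}\right]$ equipped with principal coefficients at the initial seed, every cluster variable admits the expansion
\[
x_\ell \;=\; x_1^{g_\ell^{(1)}}x_2^{g_\ell^{(2)}}\, F_\ell(\hat y_1,\hat y_2), \qquad \hat y_1 = y_1 x_2^{-r}, \quad \hat y_2 = y_2 x_1^{r},
\]
where $F_\ell$ has constant term $1$, and the specialization $y_1=y_2=1$ returns the coefficient-free cluster variable produced by the recursion of Theorem~\ref{mainthm}.

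First, I would rewrite the right-hand side of (\ref{maineq1}) by pulling a pure monomial out of the sum. Using the recursion $c_n=rc_{n-1}-c_{n-2}$ from Definition~\ref{cn}, one computes
\[
x_n \;=\; x_1^{-c_{n-1}}x_2^{c_n}\sum_{\mathbf\beta\in\mathcal F(\mathcal D_n)} \bigl(x_1^{r}\bigr)^{|\mathbf\beta|_1}\bigl(x_2^{-r}\bigr)^{|\mathbf\beta|_2}.
\]
Under $\hat y_1=x_2^{-r}$, $\hat y_2=x_1^{r}$, the summation is precisely $\sum_{\mathbf\beta}\hat y_1^{|\mathbf\beta|_2}\hat y_2^{|\mathbf\beta|_1}$, matching the claimed $F_n$. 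Since this polynomial has constant term $1$ (from $\mathbf\beta=\emptyset$), and since the pair $(g_\ell,F_\ell)$ is uniquely determined by the principal-coefficient Laurent expansion, this simultaneously reads off $g_n=(-c_{n-1},c_n)$ and identifies $F_n$.

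For the formula at $x_{3-n}$, I would apply the same manipulation to (\ref{maineq2}): pulling out the monomial $x_1^{-c_{n-2}}x_2^{c_{n-3}}$ and invoking the reverse recursion $c_{n-3}-rc_{n-2}=-c_{n-1}$ transforms (\ref{maineq2}) into
\[
x_{3-n} \;=\; x_1^{-c_{n-2}}x_2^{c_{n-3}}\sum_{\mathbf\beta\in\mathcal F(\mathcal D_n)} \bigl(x_1^{r}\bigr)^{c_{n-1}-|\mathbf\beta|_2}\bigl(x_2^{-r}\bigr)^{c_{n-2}-|\mathbf\beta|_1},
\]
which identifies $g_{3-n}=(-c_{n-2},c_{n-3})$ and $F_{3-n}=\sum_{\mathbf\beta}y_1^{c_{n-2}-|\mathbf\beta|_1}y_2^{c_{n-1}-|\mathbf\beta|_2}$.

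The subtle point --- the step I expect to be most delicate --- is the lift from coefficient-free to principal coefficients, since the proposed $F$-polynomial is not characterized by its specialization at $y_1=y_2=1$ alone. I would handle this in one of two ways: either by observing that the induction proving Theorem~\ref{mainthm} survives with $y$-weights inserted edge-by-edge on $\mathcal D_n$ (vertical edges tracked by $y_1$, each unit of $|\mathbf\beta|_1$ by $y_2$), so that the same combinatorial recursion yields the principal-coefficient statement directly; or, more economically, by combining the identification of $g_n$ above with the fact that in any rank-$2$ principal-coefficient cluster algebra an $F$-polynomial is determined by its specialization together with the requirement that the full expression be a Laurent polynomial of the prescribed $g$-vector. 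Either route reduces the corollary to routine bookkeeping given Theorem~\ref{mainthm}.
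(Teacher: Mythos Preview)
Your approach is essentially the paper's: your ``fix (b)'' is exactly what the paper does, invoking the $\mathbb{Z}^2$-grading of \cite[Proposition~6.1]{FZ4} (with $\deg x_1=(1,0)$, $\deg x_2=(0,1)$, $\deg y_1=(0,r)$, $\deg y_2=(-r,0)$) under which the principal-coefficient expansion of $x_\ell$ is homogeneous of degree $g_\ell$, so that the $y$-exponents on each monomial are forced by its $x$-exponents together with $g_\ell$. The only minor difference is that the paper obtains the $g$-vectors independently from the recursion of \cite[Proposition~6.6]{FZ4}, whereas you read them off from the $\beta=\emptyset$ term after rewriting \eqref{maineq1} and \eqref{maineq2}; both routes are valid.
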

\begin{proof}
The formulas for the $g$-vectors follow easily from the recursive relations for $g$-vectors given in \cite[Proposition 6.6]{FZ4}. To prove the formulas for the $F$-polynomials, we work
in the cluster algebra with principal coefficients at the seed \[\Sigma=\left((x_1,x_2),(y_1,y_2),\left[\begin{array}{cc} 0&r\\-r&0\\1&0\\0&1\end{array}\right] \right).\]
In this cluster algebra, the Laurent expansion in $\Sigma$ of any cluster variable $x_\ell$ is  homogeneous with respect to the $\mathbb{Z}^2$-grading 
\[ \textup{deg}\, x_1 = (1,0), \ \textup{deg} \,x_2 =(0,1)\  ,\textup{deg}\, y_1 = (0,r), \ \textup{deg}\, y_2 =(-r,0),\]
and, moreover,   deg\,$x_\ell=g_\ell$, see \cite[Proposition 6.1]{FZ4}.
It follows that the expansion formulas with principal coefficients are of the form

\begin{equation}\label{principal1}x_n=x_1^{-c_{n-1}} x_2^{-c_{n-2}}\sum_{\mathbf{\beta}\in\mathcal{F}(\mathcal{D}_n)}x_1^{r|\mathbf{\beta}|_1}x_2^{r(c_{n-1}-|\mathbf{\beta}|_2)}y_1^{a_1}y_2^{a_2}\end{equation}and
\begin{equation}\label{principal2}x_{3-n}=x_2^{-c_{n-1}} x_1^{-c_{n-2}}\sum_{\mathbf{\beta}\in\mathcal{F}(\mathcal{D}_n)}x_2^{r|\mathbf{\beta}|_1}x_1^{r(c_{n-1}-|\mathbf{\beta}|_2)}y_1^{b_1}y_2^{b_2},\end{equation}
where $a_1,a_2,b_1 $ and $b_2$ are integers such that 
\[g_n=\textup{deg}\, x_n = (-c_{n-1}+ {r|\mathbf{\beta}|_1} -r\,a_2 , {-c_{n-2}}+ {r(c_{n-1}-|\mathbf{\beta}|_2)} +r\, a_1 )\]
and 
\[ g_{3-n}=\textup{deg}\,x_{3-n} =({-c_{n-2}}+{r(c_{n-1}-|\mathbf{\beta}|_2)} -r\,b_2 , 
{-c_{n-1}}+ {r|\mathbf{\beta}|_1} +r\,b_1).\]
Now, using the formulas for the $g$-vectors in the corollary and the recurrence $c_n=r\,c_{n-1}-c_{n-2}$, we get
$a_1=|\mathbf{\beta}|_2$, $a_2=|\mathbf{\beta}|_1$, $b_1=c_{n-2}-|\mathbf{\beta}|_1$ and
$b_2=c_{n-1}-|\mathbf{\beta}|_2$. The formulas for the $F$-polynomials now follow by setting $x_1=x_2=1$ in the equations (\ref{principal1}) and (\ref{principal2}).
 \end{proof}

Theorem~\ref{mainthm} also enables us to compute the Euler-Poincar\'{e} characteristics of  certain quiver Grassmannians. Let $Q_r$ be the generalized Kronecker  quiver with  two vertices 1 and 2, and $r$ arrows from 1 to 2. For $n\geq 3$, let $M(n)$ (resp. $M(3-n)$) be the unique (up to an isomorphism) indecomposable representation of dimension vector $(c_{n-1}, c_{n-2})$ (resp. $(c_{n-2}, c_{n-1})$). Then the indecomposable projective representations are $P(2)=M(0)$ and $P(1)=M(-1)$, and any indecomposable preprojective representation is of the form  $M(3-n)$. Similarly, the indecomposable injective representations are $I(1)=M(3)$ and $I(2)=M(4)$, and any indecomposable preinjective representation is of the form  $M(n)$.  Let $\text{Gr}_{(e_1,e_2)}(M(n))$ (resp. $\text{Gr}_{(e_1,e_2)}(M(3-n))$) be the variety parametrizing all subrepresentations of $M(n)$ (resp. $M(3-n)$) of dimension vector $(e_1,e_2)$. We use a result of Caldero and  Zelevinsky \cite[Theorem 3.2 and (3.5)]{CZ}.

\begin{thm}[Caldero and  Zelevinsky]
Let $n\geq 3$. Then the cluster variable $x_n$ is equal to 
$$
x_1^{-c_{n-1}} x_2^{-c_{n-2}}\sum_{e_1,e_2} \chi(\emph{Gr}_{(e_1,e_2)}(M(n)))x_1^{r(c_{n-2}-e_{2})} x_2^{re_{1}},
$$and $x_{3-n}$ is equal to 
$$
x_1^{-c_{n-2}} x_2^{-c_{n-1}} \sum_{e_1,e_2} \chi(\emph{Gr}_{(e_1,e_2)}(M(3-n)))x_1^{r(c_{n-1}-e_{2})}x_2^{re_1}.
$$
\end{thm}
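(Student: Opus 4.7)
The plan is to derive the formula as an instance of the Caldero--Chapoton cluster character associated to the generalized Kronecker quiver $Q_r$. Since the exchange matrix is acyclic, one has a bijection between non-initial cluster variables and indecomposable rigid representations of $Q_r$: the preinjective $M(n)$ corresponds to $x_n$ for $n\geq 3$, while the preprojective $M(3-n)$ corresponds to $x_{3-n}$. This identification is verified by induction on $n$, matching the exchange relation $x_{n+1}x_{n-1}=x_n^r+1$ with the Auslander--Reiten translation and the almost split sequence $0\to M(n-1)\to M(n)^{\oplus r}\to M(n+1)\to 0$ (together with its analogue on the preprojective side).

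Granting this correspondence, the proof reduces to a direct evaluation of the Caldero--Chapoton formula
\[X_M=\sum_{e}\chi(\mathrm{Gr}_e(M))\;x_1^{-\langle e,S_1\rangle-\langle S_1,\underline{\dim}\,M-e\rangle}\;x_2^{-\langle e,S_2\rangle-\langle S_2,\underline{\dim}\,M-e\rangle},\]
where $\langle-,-\rangle$ denotes the Euler form of $Q_r$. For the generalized Kronecker quiver one has $\langle(a,b),(c,d)\rangle=ac+bd-r\,ad$. Substituting the simple roots $S_1=(1,0)$, $S_2=(0,1)$ and writing $\underline{\dim}\,M=(d_1,d_2)$, a short calculation yields exponents $-d_1+r(d_2-e_2)$ for $x_1$ and $-d_2+re_1$ for $x_2$. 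Specializing to $M=M(n)$ with dimension vector $(c_{n-1},c_{n-2})$ recovers the first displayed formula in the theorem; specializing to $M=M(3-n)$ with dimension vector $(c_{n-2},c_{n-1})$ gives the second.

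The main obstacle is justifying the two black-box ingredients: the Caldero--Chapoton formula itself, and the identification of cluster variables with indecomposable rigid modules. In the rank-2 acyclic setting these can be established together by induction on $n$. The key input is the Caldero--Keller multiplication formula applied to the almost split sequence above: one shows that the proposed Laurent expansion satisfies $X_{M(n+1)}X_{M(n-1)}=X_{M(n)}^{\,r}+1$, which reduces to an equality of Euler characteristics of quiver Grassmannians, obtained by stratifying $\mathrm{Gr}_e(M(n)^{\oplus r})$ according to the kernel of the projection onto $M(n+1)$. The exponent bookkeeping is controlled by the recurrence $c_n=r\,c_{n-1}-c_{n-2}$, which is precisely the relation defining $\{c_n\}$.
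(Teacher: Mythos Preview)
The paper does not prove this statement; it is quoted as a result of Caldero--Zelevinsky (see the sentence immediately preceding the theorem, which cites \cite[Theorem~3.2 and~(3.5)]{CZ}), and is then combined with Theorem~\ref{mainthm} to obtain the corollary on Euler characteristics of quiver Grassmannians. So there is no ``paper's own proof'' to compare against.

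Your derivation via the Caldero--Chapoton character is essentially the argument carried out in \cite{CZ} (specializing \cite{CC} to the generalized Kronecker quiver). The exponent computation with the Euler form $\langle(a,b),(c,d)\rangle=ac+bd-r\,ad$ is correct and immediately yields both displayed formulas once one knows which indecomposable corresponds to which cluster variable. One small correction: with the paper's labeling ($M(3)=I(1)$, $M(4)=I(2)$, hence $\tau M(n)=M(n+2)$ on the preinjective side), the almost split sequence runs
\[0\longrightarrow M(n+1)\longrightarrow M(n)^{\oplus r}\longrightarrow M(n-1)\longrightarrow 0,\]
i.e.\ the opposite orientation from what you wrote; a dimension count using $c_{n+1}+c_{n-1}=rc_n$ confirms this direction (the larger module is the kernel). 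This does not affect the Caldero--Keller multiplication identity you invoke, since only the isomorphism classes at the two ends and in the middle enter the relation $X_{M(n+1)}X_{M(n-1)}=X_{M(n)}^{\,r}+1$, but the short exact sequence as you stated it is not exact.
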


In the case $n=3$, the representations $M(3)$ and $M(0)$ are simple representation, meaning that they do not have non-trivial subrepresentations. Therefore the Euler-Poincar\'e characteristics of their Grassmannians are non-zero if and only  if the dimension vector $(e_1,e_2)$ is equal to  $(0,0)$ or equal to the dimension of the simple representation itself, in which cases the Euler-Poincar\'e characteristic is 1. For $n\ge 4$, we get the following result.
\begin{cor} 
Let $n\geq 4$. Then for any integers $e_1$ and $e_2$,
$$\chi(\emph{Gr}_{(e_1,e_2)}(M(n)))=\#\{\mathbf{\beta}\in\mathcal{F}(\mathcal{D}_n): \,|\mathbf{\beta}|_1=c_{n-2}-e_2, |\mathbf{\beta}|_2=c_{n-1}-e_1 \}$$
and
$$\chi(\emph{Gr}_{(e_1,e_2)}(M(3-n)))=\#\{\mathbf{\beta}\in\mathcal{F}(\mathcal{D}_n):\,|\mathbf{\beta}|_1=e_1, |\mathbf{\beta}|_2=e_2 \}.$$
\end{cor}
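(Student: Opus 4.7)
The strategy is to compare the two available expansion formulas for $x_n$ (and for $x_{3-n}$) monomial-by-monomial. The plan is to write Theorem~\ref{mainthm} and the Caldero--Zelevinsky theorem in a common form and then read off the coefficients of each monomial $x_1^a x_2^b$ as two different counts of the same object.

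\textbf{Setting up the comparison.} First I would rewrite Theorem~\ref{mainthm} by grouping the elements of $\mathcal{F}(\mathcal{D}_n)$ according to the pair $(|\beta|_1,|\beta|_2)$. For fixed nonnegative integers $p,q$, let
\[ N_n(p,q)=\#\bigl\{\beta\in\mathcal{F}(\mathcal{D}_n)\,:\,|\beta|_1=p,\ |\beta|_2=q\bigr\}.\]
Then Theorem~\ref{mainthm} becomes
\[
x_n = x_1^{-c_{n-1}}x_2^{-c_{n-2}} \sum_{p,q} N_n(p,q)\, x_1^{rp}x_2^{r(c_{n-1}-q)}.
\]
Similarly the Caldero--Zelevinsky formula gives
\[
x_n = x_1^{-c_{n-1}}x_2^{-c_{n-2}} \sum_{e_1,e_2} \chi\bigl(\mathrm{Gr}_{(e_1,e_2)}(M(n))\bigr)\, x_1^{r(c_{n-2}-e_2)} x_2^{re_1}.
\]

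\textbf{Matching exponents.} Since both expressions are equal in $\mathbb{Z}[x_1^{\pm1},x_2^{\pm1}]$, I would compare the coefficient of $x_1^{rp-c_{n-1}}x_2^{r(c_{n-1}-q)-c_{n-2}}$ on both sides. On the Caldero--Zelevinsky side this monomial appears precisely when $c_{n-2}-e_2=p$ and $e_1=c_{n-1}-q$, i.e.\ $(e_1,e_2)=(c_{n-1}-q,\,c_{n-2}-p)$. Equating coefficients yields
\[
\chi\bigl(\mathrm{Gr}_{(e_1,e_2)}(M(n))\bigr) = N_n(c_{n-2}-e_2,\, c_{n-1}-e_1),
\]
which is exactly the first formula of the corollary. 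The second formula is obtained by the same token from the two expressions for $x_{3-n}$: matching $x_1^{r(c_{n-1}-q)-c_{n-2}}x_2^{rp-c_{n-1}}$ forces $c_{n-1}-e_2=c_{n-1}-q$ and $e_1=p$, giving $\chi(\mathrm{Gr}_{(e_1,e_2)}(M(3-n)))=N_n(e_1,e_2)$.

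\textbf{Checks and obstacles.} The only points that require care are bookkeeping ones: verifying that the exponents on $x_1$ and $x_2$ in Theorem~\ref{mainthm} are each monotone determined by $(|\beta|_1,|\beta|_2)$, so that distinct pairs $(p,q)$ contribute distinct monomials and term-by-term comparison is legitimate; and checking that the ranges of $(p,q)$ and $(e_1,e_2)$ correspond correctly under the affine change of variables above (with $N_n(p,q)=0$ and $\chi(\mathrm{Gr}_{(e_1,e_2)}(M(n)))=0$ outside their natural ranges, so both sums can be read over all of $\mathbb{Z}^2$). There is no substantive geometric or combinatorial obstacle beyond this; the proof is essentially a linear-algebra identification between two weighted enumerations of the same Laurent monomials, made possible by Theorem~\ref{mainthm} providing a combinatorial model for coefficients that Caldero--Zelevinsky only realize as Euler characteristics.
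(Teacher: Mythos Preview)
Your proposal is correct and is exactly the argument the paper has in mind: the corollary is stated without proof immediately after the Caldero--Zelevinsky theorem, because it follows at once by equating the coefficients of each Laurent monomial in the two expansions of $x_n$ (and of $x_{3-n}$) given by Theorem~\ref{mainthm} and by Caldero--Zelevinsky. Your bookkeeping remark that distinct pairs $(p,q)$ yield distinct monomials is automatic since $(p,q)\mapsto (rp,\,r(c_{n-1}-q))$ is injective for $r\ge 2$.
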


\section{Proofs}

Since (\ref{maineq2}) can be easily obtained from (\ref{maineq1}) by interchanging $x_1$ and $x_2$, we will prove (\ref{maineq1}). In addition to our theoretical proof, our formula is checked by Macaulay 2 for any $r,n$ with $r+n\leq 11$. 
 We need more notation.

\begin{defn}
For integers $u,n$ with $3\leq u\leq n-1$, let $$\aligned\mathcal{T}^{\geq u}(\mathcal{D}_n):=\{\{\beta_1,\cdots,\beta_t\}\,|&\,t\geq 1,\,\beta_j\in \mathcal{P}(\mathcal{D}_n)\text{ for all }1\leq j\leq t,\,\\
 &\text{ if }j\neq j'\text{ then }\beta_j\text{ and }\beta_{j'}\text{ have no common edge,}\\ 
 &\text{ if }\beta_j=\alpha(i,k)\text{ and }\beta_{j'}=\alpha(i',k')\text{ then }i\neq k'\text{ and }i'\neq k,\\
&\text{ and there exist integers }j,w,m,\text{ with }m\geq u\text{ such that }\\
&\,\,\,\,\,\,\,\,\,\,\,\,\,\,\beta_j\text{ is }(m,w)\text{-green and none of the }(c_{m-1}-wc_{m-2})\\&\,\,\,\,\,\,\,\,\,\,\,\,\,\,\text{preceding edges of }v_i\text{ is contained in any }\beta_{j'}\}.
\endaligned$$.
\end{defn}
\begin{defn}
Let $$\aligned \widetilde{\mathcal{F}}(\mathcal{D}_n)=\{\{\beta_1,\cdots,\beta_t\}\,|&\,t\geq 0,\,\beta_j\in \mathcal{P}(\mathcal{D}_n)\text{ for all }1\leq j\leq t,\,\\ &\text{ if }j\neq j'\text{ then }\beta_j\text{ and }\beta_{j'}\text{ have no common edge,}\\ &\text{ and if }\beta_j=\alpha(i,k)\text{ and }\beta_{j'}=\alpha(i',k')\text{ then }i\neq k'\text{ and }i'\neq k\}.  \endaligned$$  \qed
\end{defn}

\begin{lem}\label{20110516eq2}
If $m\geq n-1$, then there do not exist $i,w$  $(1\leq w< r-1)$ such that $\min\{t\,|\,i<t\leq c_{n-2},\,s_{i,t}> s\}$ is of the form $i+c_m-wc_{m-1}$. In particular, for any $n\geq 4$, the set $\mathcal{T}^{\geq n-1}(\mathcal{D}_{n})$ is empty and 
\begin{equation}\label{20110516eq}{\mathcal{F}}(\mathcal{D}_n)=\widetilde{\mathcal{F}}(\mathcal{D}_n)\setminus\mathcal{T}^{\geq 3}(\mathcal{D}_{n}).\end{equation}
 \end{lem}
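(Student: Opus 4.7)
The first assertion reduces to a single elementary inequality for the sequence $\{c_m\}$; the two ``in particular'' claims are then formal.

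Suppose toward a contradiction that $m\ge n-1$ and that there exist $i,w$ with $1\le w\le r-2$ for which the minimum
\[ t\;:=\;\min\{t'\,:\,i<t'\le c_{n-2},\ s_{i,t'}>s\} \]
equals $i+c_m-wc_{m-1}$. Since by construction $t\le c_{n-2}$ and $i\ge 0$, this forces $c_m-wc_{m-1}\le c_{n-2}$. Using the defining recursion $c_m=rc_{m-1}-c_{m-2}$ together with the upper bound $w\le r-2$, I would rewrite
\[ c_m-wc_{m-1}\;\ge\;c_m-(r-2)c_{m-1}\;=\;2c_{m-1}-c_{m-2}. \]
So it remains to show $2c_{m-1}-c_{m-2}>c_{n-2}$ whenever $m\ge n-1$, yielding the contradiction. (The case $r=2$ is vacuous since the range $1\le w<r-1$ is empty.)

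The only auxiliary fact I need is strict monotonicity $c_{\ell-1}<c_\ell$ for all $\ell\ge 2$. For $r=2$ this is immediate from $c_\ell=\ell-1$. For $r\ge 3$ it follows by an easy induction from $c_\ell-c_{\ell-1}=(r-1)c_{\ell-1}-c_{\ell-2}$ with base case $c_1=0<1=c_2$: assuming $c_{\ell-2}<c_{\ell-1}$, we get $c_\ell-c_{\ell-1}\ge 2c_{\ell-1}-c_{\ell-2}>c_{\ell-1}>0$. Granting monotonicity, $c_{m-1}>c_{m-2}$ gives $2c_{m-1}-c_{m-2}>c_{m-1}$, while the hypothesis $m\ge n-1$ gives $c_{m-1}\ge c_{n-2}$, so $2c_{m-1}-c_{m-2}>c_{n-2}$, as required.

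The two consequences are then immediate. The set $\mathcal{T}^{\ge n-1}(\mathcal{D}_n)$ requires, by its definition, an $(m,w)$-green subpath with $m\ge n-1$, and the first assertion just established rules out the very existence of such a subpath, so $\mathcal{T}^{\ge n-1}(\mathcal{D}_n)=\emptyset$. For the identity $\mathcal{F}(\mathcal{D}_n)=\widetilde{\mathcal{F}}(\mathcal{D}_n)\setminus\mathcal{T}^{\ge 3}(\mathcal{D}_n)$, I would simply unfold definitions: the three structural clauses on $\{\beta_1,\ldots,\beta_t\}$ in the definitions of $\widetilde{\mathcal{F}}(\mathcal{D}_n)$ and $\mathcal{F}(\mathcal{D}_n)$ coincide, and by Definition~\ref{alpha(i,k)} every $(m,w)$-green subpath automatically satisfies $m\ge 3$. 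Therefore belonging to $\mathcal{T}^{\ge 3}(\mathcal{D}_n)$ is exactly the negation of the green-covering clause that distinguishes $\mathcal{F}(\mathcal{D}_n)$ inside $\widetilde{\mathcal{F}}(\mathcal{D}_n)$. There is no genuine obstacle in this argument; the only technical care is maintaining strictness at the step $2c_{m-1}-c_{m-2}>c_{m-1}$ and isolating the trivial $r=2$ case.
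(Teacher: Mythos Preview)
Your argument is correct and follows essentially the same route as the paper: both proofs derive a contradiction by showing that $c_m-wc_{m-1}>c_{n-2}$ once $m\ge n-1$ and $w\le r-2$, so that the putative minimum $t=i+c_m-wc_{m-1}$ would exceed the upper bound $c_{n-2}$. The only cosmetic difference is the order of the two estimates (the paper first uses $m\ge n-1$ to reduce to $c_{n-1}-wc_{n-2}$ and then invokes $w\le r-2$, whereas you first use $w\le r-2$ to reduce to $2c_{m-1}-c_{m-2}$ and then invoke monotonicity), and you have spelled out the ``in particular'' consequences that the paper leaves implicit.
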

\begin{proof}
If $m\geq n-1$ and $\min\{t\,|\,i<t\leq c_{n-2},\,s_{i,t}> s\}=i+c_m-wc_{m-1}$, then $\min\{t\,|\,i<t\leq c_{n-2},\,s_{i,t}> s\}\geq c_{n-1}-wc_{n-2}$, which would be greater than $c_{n-2}$ because $w\leq r-2$. But this is a contradiction, because $v_{c_{n-2}}$ is the highest vertex in $\mathcal{D}_{n}$.
\end{proof}

Let $z_3=x_3$ and
\begin{equation}\label{20110411z}z_n=x_1^{-c_{n-1}} x_2^{-c_{n-2}}\sum_{\mathbf{\beta}\in\widetilde{\mathcal{F}}(\mathcal{D}_n)}x_1^{r|\mathbf{\beta}|_1}x_2^{r(c_{n-1}-|\mathbf{\beta}|_2)}\end{equation}for $n\geq 4$.

Let $K = \mathbb{Q}(x_1,x_2)$ be the field of rational functions in the commutative
variables $x_1$ and $x_2$. Let $F$ be the automorphism of $K$, which is defined by
\begin{equation}\label{Kont_map}F : \left\{\begin{array}{l}x_1 \mapsto x_2 \\ x_2 \mapsto \frac{1+x_2^r}{x_1}.\end{array}\right.\end{equation}

\begin{lem}\label{20110411lem1} Let $n\geq 3$. Then
$$z_{n+1}=F(z_{n})+x_1^{-c_{n}} x_2^{-c_{n-1}}\sum_{\mathbf{\beta}\in\mathcal{T}^{\geq 3}(\mathcal{D}_{n+1})\setminus \mathcal{T}^{\geq 4}(\mathcal{D}_{n+1})}x_1^{r|\mathbf{\beta}|_1}x_2^{r(c_{n}-|\mathbf{\beta}|_2)}.$$
\end{lem}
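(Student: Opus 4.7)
The plan is a direct computation combined with a combinatorial bijection. I first apply $F$ to the formula (\ref{20110411z}) for $z_n$. Using $F(x_1)=x_2$, $F(x_2)=(1+x_2^r)/x_1$, together with the recurrence $c_n=rc_{n-1}-c_{n-2}$, the exponents rearrange to give
$$F(z_n) \;=\; \sum_{\mathbf{\beta}\in \widetilde{\mathcal{F}}(\mathcal{D}_n)} x_1^{\,r|\mathbf{\beta}|_2-c_n}\, x_2^{\,r|\mathbf{\beta}|_1-c_{n-1}}\,(1+x_2^r)^{\,c_n-r|\mathbf{\beta}|_2}.$$
Expanding $(1+x_2^r)^{c_n-r|\mathbf{\beta}|_2}$ via the binomial theorem rewrites this as a double sum indexed by pairs $(\mathbf{\beta},S)$, where $S$ is a subset of size $k$ drawn from a reservoir of $c_n-r|\mathbf{\beta}|_2$ tokens. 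A quick degree check shows that the resulting monomial $x_1^{r|\mathbf{\beta}|_2-c_n} x_2^{r|\mathbf{\beta}|_1+rk-c_{n-1}}$ matches exactly the monomial of a term in $z_{n+1}$ corresponding to some $\mathbf{\gamma}\in\widetilde{\mathcal{F}}(\mathcal{D}_{n+1})$ with $|\mathbf{\gamma}|_1=|\mathbf{\beta}|_2$ and $|\mathbf{\gamma}|_2=c_n-|\mathbf{\beta}|_1-k$.

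The core combinatorial step is to construct an injection
$$\Phi\colon\ \bigsqcup_{\mathbf{\beta}\in\widetilde{\mathcal{F}}(\mathcal{D}_n)}\{S:\,|S|\le c_n-r|\mathbf{\beta}|_2\}\ \longrightarrow\ \widetilde{\mathcal{F}}(\mathcal{D}_{n+1})$$
realizing these degree constraints, whose image is precisely $\widetilde{\mathcal{F}}(\mathcal{D}_{n+1})\setminus(\mathcal{T}^{\geq 3}(\mathcal{D}_{n+1})\setminus\mathcal{T}^{\geq 4}(\mathcal{D}_{n+1}))$. The geometric input is that $\mathcal{D}_{n+1}$ contains a natural ``shifted'' copy of the data of $\mathcal{D}_n$: the vertices $v_j$ of $\mathcal{D}_n$ embed into the vertices of $\mathcal{D}_{n+1}$, and each subpath $\alpha(i,k)\in\mathcal{P}(\mathcal{D}_n)$ lifts to a subpath of $\mathcal{D}_{n+1}$. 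Under this lift the index $m$ of any $(m,w)$-GREEN subpath of $\mathcal{D}_n$ (necessarily $m\ge 3$) shifts to $m+1\ge 4$ in $\mathcal{D}_{n+1}$, while the reservoir $S$ records which of the remaining free edges of $\mathcal{D}_{n+1}$ are included as single-edge pieces $\alpha_i$.

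Once $\Phi$ is set up, the lemma follows because the complement of the image in $\widetilde{\mathcal{F}}(\mathcal{D}_{n+1})$ is exactly $\mathcal{T}^{\geq 3}\setminus\mathcal{T}^{\geq 4}$: a configuration $\mathbf{\gamma}$ lies in the image iff it is either free of bad green subpaths (not in $\mathcal{T}^{\geq 3}$) or contains at least one bad green with $m\ge 4$ (i.e.\ belongs to $\mathcal{T}^{\geq 4}$). Configurations whose bad greens all have $m=3$ have no preimage, because $\mathcal{D}_n$ has no $(2,w)$-GREEN subpaths that could lift to them, and they therefore constitute the error term in the lemma.

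The main obstacle is the precise definition of $\Phi$ and the verification that it is a bijection onto the claimed image. One must track how the slope rule $s_{i,t}>s$ of Definition~\ref{alpha(i,k)}, together with the BLUE/GREEN/RED color convention, transforms under the embedding $\mathcal{D}_n\hookrightarrow\mathcal{D}_{n+1}$, and check that both the ``no common edge'' condition and the endpoint condition $i\ne k',\,i'\ne k$ entering $\widetilde{\mathcal{F}}$ are preserved. A careful case analysis separated by color type, and by whether a subpath contains or abuts the image of $v_{c_{n-2}}$, will likely be required; this is where most of the technical work lies.
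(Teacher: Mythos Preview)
Your opening computation of $F(z_n)$ is correct, but the bijective strategy breaks down at the very next step. The exponent $c_n - r|\mathbf{\beta}|_2$ can be \emph{negative}: for instance, if $\mathbf{\beta}$ consists of all $c_{n-1}$ single edges of $\mathcal{D}_n$ then $c_n - r|\mathbf{\beta}|_2 = c_n - rc_{n-1} = -c_{n-2} < 0$. In that range $(1+x_2^r)^{c_n-r|\mathbf{\beta}|_2}$ is a genuine rational function, its binomial expansion uses generalized binomial coefficients with alternating signs, and there is no finite ``reservoir of $c_n-r|\mathbf{\beta}|_2$ tokens'' from which your set $S$ could be drawn. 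So the injection $\Phi$ cannot even be defined on its putative domain, and a direct term-by-term bijection between the expansion of $F(z_n)$ and a subset of $\widetilde{\mathcal{F}}(\mathcal{D}_{n+1})$ is not available in the form you propose.

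The paper's proof is algebraic rather than bijective. It first groups the sum over $\mathbf{\beta}\in\widetilde{\mathcal{F}}(\mathcal{D}_n)$ according to the underlying edge set $V=\bigcup_j \beta_j \subset [1,c_{n-1}]$, and identifies the $\mathbf{\beta}$'s with union $V$ with subsets $U\subset f^*(V)$, where $f$ is an explicit lift from subsets of $[1,c_{n-1}]$ to subsets of $[1,c_n]$ and $f^*$ its pull-back. The resulting double sum of (generalized) binomial coefficients is collapsed via the Chu--Vandermonde identity to a single binomial with upper entry $c_n - r|V| + |f^*(V)|$, which an auxiliary lemma rewrites as $c_n - |f(V)| - \delta_V$. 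A parallel parametrization of $z_{n+1}$ through the same $V$ (via $f$) produces binomial upper entry $c_n - |f(V)|$; the discrepancy $\delta_V$ counts precisely the $(3,w)$-green pieces, and a final counting lemma identifies the difference with the sum over $\mathcal{T}^{\ge 3}(\mathcal{D}_{n+1})\setminus\mathcal{T}^{\ge 4}(\mathcal{D}_{n+1})$. Your intuition that green indices shift $m\mapsto m+1$ is essentially what the map $f$ encodes, but it only becomes usable after the Chu--Vandermonde resummation has absorbed the negative-exponent terms.
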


\begin{lem}\label{20110411lem2} Let $u\geq 3$ and $n\geq u+2$. Then
$$\aligned 
& F\left(x_1^{-c_{n-1}} x_2^{-c_{n-2}}\sum_{\mathbf{\beta}\in\mathcal{T}^{\geq u}(\mathcal{D}_n)\setminus \mathcal{T}^{\geq u+1}(\mathcal{D}_n)}x_1^{r|\mathbf{\beta}|_1}x_2^{r(c_{n-1}-|\mathbf{\beta}|_2)}\right)\\
&=x_1^{-c_{n}} x_2^{-c_{n-1}}\sum_{\mathbf{\beta}\in\mathcal{T}^{\geq u+1}(\mathcal{D}_{n+1})\setminus \mathcal{T}^{\geq u+2}(\mathcal{D}_{n+1})}x_1^{r|\mathbf{\beta}|_1}x_2^{r(c_{n}-|\mathbf{\beta}|_2)}.\endaligned$$
\end{lem}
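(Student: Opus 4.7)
First I would apply the automorphism $F$ termwise to the left-hand side. Using $F(x_1)=x_2$, $F(x_2)=(1+x_2^r)/x_1$, together with the three-term recurrence $c_n=rc_{n-1}-c_{n-2}$, a direct computation gives
$$F\!\left(x_1^{r|\beta|_1-c_{n-1}}\,x_2^{r(c_{n-1}-|\beta|_2)-c_{n-2}}\right)=x_1^{-c_n}x_2^{-c_{n-1}}\cdot x_1^{r|\beta|_2}\,x_2^{r|\beta|_1}\,(1+x_2^r)^{c_n-r|\beta|_2}.$$
Pulling the common factor $x_1^{-c_n}x_2^{-c_{n-1}}$ out of both sides of the asserted identity, the lemma reduces to the polynomial equality
$$\sum_{\beta\in\mathcal{T}^{\geq u}\setminus\mathcal{T}^{\geq u+1}(\mathcal{D}_n)}x_1^{r|\beta|_2}\,x_2^{r|\beta|_1}\,(1+x_2^r)^{c_n-r|\beta|_2}=\sum_{\beta'\in\mathcal{T}^{\geq u+1}\setminus\mathcal{T}^{\geq u+2}(\mathcal{D}_{n+1})}x_1^{r|\beta'|_1}\,x_2^{r(c_n-|\beta'|_2)}.$$

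Next I would expand $(1+x_2^r)^{c_n-r|\beta|_2}$ by the binomial theorem, reading each term as the choice of a subset $S$ of an auxiliary set of $c_n-r|\beta|_2$ ``free'' single edges. With the geometric interpretation I introduce in the next step, these auxiliary edges are precisely the edges of $\mathcal{D}_{n+1}$ which do \emph{not} lie on the canonical image of $\mathcal{D}_n$. The identity then becomes the existence of a weight-preserving bijection between pairs $(\beta,S)$ appearing on the left and elements $\beta'\in\mathcal{T}^{\geq u+1}\setminus\mathcal{T}^{\geq u+2}(\mathcal{D}_{n+1})$ on the right, satisfying the exponent-matching conditions $|\beta'|_1=|\beta|_2$ and $|\beta'|_2=c_n-|\beta|_1-|S|$.

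To construct this bijection, I would exploit the recursive structure of the Christoffel word of slope $c_{n-1}/(c_n-c_{n-1})$ (for $\mathcal{D}_{n+1}$), which contains the Christoffel word of slope $c_{n-2}/(c_{n-1}-c_{n-2})$ (for $\mathcal{D}_n$) as a canonical substring, up to the swap of horizontal and vertical roles reflecting the action of $F$. This yields an embedding $\iota\colon\mathcal{D}_n\hookrightarrow\mathcal{D}_{n+1}$ sending the indexed vertices $v_0,\ldots,v_{c_{n-2}}$ of $\mathcal{D}_n$ to a distinguished subsequence $\widetilde v_{\phi(0)},\ldots,\widetilde v_{\phi(c_{n-2})}$ of vertices of $\mathcal{D}_{n+1}$. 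The crucial numerical consequence of $c_{m+1}=rc_m-c_{m-1}$ is the identity
$$\phi(i+c_m-wc_{m-1})=\phi(i)+c_{m+1}-wc_m,$$
which is exactly what promotes an $(m,w)$-green subpath of $\mathcal{D}_n$ to an $(m+1,w)$-green subpath of $\mathcal{D}_{n+1}$. The bijection then sends $(\beta,S)$ to the family $\beta'$ consisting of the lifts $\iota(\beta_j)$ together with the single-edge elements of $\mathcal{P}(\mathcal{D}_{n+1})$ indexed by $S$. The condition that the maximum green-level of $\beta$ is exactly $u$ transports to the maximum green-level of $\beta'$ being exactly $u+1$ (using the level shift, together with Lemma \ref{20110516eq2} to rule out bogus higher levels); conversely, given $\beta'$ one extracts the unique top-level $(u+1,w)$-green subpath, pulls the rest back along $\iota$ to recover $\beta$, and records the leftover single-edge subpaths as $S$.

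The main obstacle will be verifying that the ``preceding edges'' condition is transported faithfully by $\iota$: one must check that the window of $c_{m-1}-wc_{m-2}$ preceding edges of $v_i$ in $\mathcal{D}_n$ embeds into the window of $c_m-wc_{m-1}$ preceding edges of $\widetilde v_{\phi(i)}$ in $\mathcal{D}_{n+1}$, that the auxiliary edges in $S$ never fall inside this window, and that the overlap/adjacency constraints ($i\ne k'$ and $i'\ne k$) are preserved under lifting and augmentation by $S$. Once this combinatorial bookkeeping is in place, the weight identities $|\beta'|_1=|\beta|_2$ and $|\beta'|_2=c_n-|\beta|_1-|S|$ follow from an explicit count of horizontal vs.\ vertical edges of $\mathcal{D}_n$ and $\mathcal{D}_{n+1}$ delivered by the Christoffel recursion, together with the $|S|$ extra edges contributed by the binomial expansion.
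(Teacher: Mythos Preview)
Your approach is genuinely different from the paper's, and the difference is instructive.

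The paper does \emph{not} construct a bijection. Instead it exploits the global induction scheme (\ref{globalinduction}): assuming Lemma~\ref{20110411lem3} (i.e.\ the main formula) holds for all indices $\le n$, it identifies the left-hand sum with a polynomial in the cluster variables themselves. Concretely, in the case $n=u+2$ (and the general case is the same idea), every $\beta\in\mathcal{T}^{\geq u}(\mathcal{D}_n)\setminus\mathcal{T}^{\geq u+1}(\mathcal{D}_n)$ is forced to contain the unique $(n-2,w)$-green subpath $\alpha(wc_{n-3},c_{n-2})$ for some $w$, and the preceding-edges condition carves out a gap so that the remaining subpath $[1,(rw-1)c_{n-3}]$ decomposes into $(w-1)$ copies of $\mathcal{D}_{n-1}$, $(r-1)$ copies of $\mathcal{D}_{n-2}$, and $(w-1)$ copies of $\mathcal{D}_{n-3}$. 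Summing over $\beta$ on each copy gives exactly the numerator of $x_{n-1}$, $x_{n-2}$, $x_{n-3}$ by the induction hypothesis, so the whole left-hand side collapses to $\sum_{w=1}^{r-2}(x_{n-1})^{w-1}(x_{n-2})^{r-1}(x_{n-3})^{w-1}$. The right-hand side collapses the same way with indices shifted by one, and since $F(x_m)=x_{m+1}$ the identity is immediate. The ``preceding edges'' and colour bookkeeping that you flag as the main obstacle is thus never confronted directly; it is absorbed into the inductive recognition of the smaller Dyck paths as cluster variables.

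Your bijective plan, by contrast, attacks the identity monomial by monomial and does not invoke Lemma~\ref{20110411lem3} at all. This is conceptually appealing but runs into a real gap. After the binomial expansion you have $c_n-r|\beta|_2$ ``free'' slots, yet by Lemma~\ref{thenumofelem} the complement of the lifted support in $\mathcal{D}_{n+1}$ has size $c_n-|f(V)|=c_n-r|\beta|_2+|f^*(V)|+\delta_V$, not $c_n-r|\beta|_2$; the discrepancy $|f^*(V)|+\delta_V$ is exactly what produces the extra green terms in the proof of Lemma~\ref{20110411lem1}, and it will recur here. Moreover, when an $(u,w)$-green subpath is promoted to an $(u{+}1,w)$-green subpath, the preceding-edge window strictly enlarges from $c_{u-1}-wc_{u-2}$ to $c_u-wc_{u-1}$ edges, and some of the new edges in that window are genuinely available to $S$; so not every pair $(\beta,S)$ lands in $\mathcal{T}^{\geq u+1}(\mathcal{D}_{n+1})$, and conversely not every $\beta'\in\mathcal{T}^{\geq u+1}\setminus\mathcal{T}^{\geq u+2}$ arises this way without further correction terms. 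You have correctly identified this as the main obstacle, but your plan as written does not explain how these two defects cancel; in the paper's approach they never have to, because the decomposition into copies of $\mathcal{D}_{n-1},\mathcal{D}_{n-2},\mathcal{D}_{n-3}$ and the appeal to the induction hypothesis sidestep the issue entirely.
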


\begin{lem}\label{20110411lem3} Let $n\geq 4$.  Then
\begin{equation}\label{20110411lem3f}\aligned  x_n&=z_n-\sum_{m=5}^n F^{n-m}\left(x_1^{-c_{m-1}} x_2^{-c_{m-2}}\sum_{\mathbf{\beta}\in\mathcal{T}^{\geq 3}(\mathcal{D}_m)\setminus \mathcal{T}^{\geq 4}(\mathcal{D}_m)}x_1^{r|\mathbf{\beta}|_1}x_2^{r(c_{m-1}-|\mathbf{\beta}|_2)}\right)\\
&=x_1^{-c_{n-1}} x_2^{-c_{n-2}}\sum_{\mathbf{\beta}\in\mathcal{F}(\mathcal{D}_n)}x_1^{r|\mathbf{\beta}|_1}x_2^{r(c_{n-1}-|\mathbf{\beta}|_2)}.
\endaligned\end{equation}
\end{lem}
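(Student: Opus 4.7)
The plan is to prove the two equalities in sequence, with the first being an induction driven by Lemma~\ref{20110411lem1} and the second a purely combinatorial identification via Lemma~\ref{20110516eq2} and iterated application of Lemma~\ref{20110411lem2}. The essential preliminary observation is that the automorphism $F$ implements the cluster shift, $F(x_n) = x_{n+1}$ for all integers $n$: this is immediate for $n=1,2$ from the definition (\ref{Kont_map}), and for $n \geq 2$ it follows by induction on $n$ from the recurrence $x_{n+1} = (x_n^r+1)/x_{n-1}$ after applying $F$ to both sides. In particular, $x_n = F^{n-3}(x_3) = F^{n-3}(z_3)$.

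For the first equality in (\ref{20110411lem3f}), I would induct on $n \geq 4$. The base case $n=4$ requires checking that $\mathcal{T}^{\geq 3}(\mathcal{D}_4) = \emptyset$: any $(m,w)$-green subpath demands $3 \leq m \leq n-2 = 2$, which is vacuous, so the sum is empty and Lemma~\ref{20110411lem1} at $n=3$ reduces to $z_4 = F(z_3) = F(x_3) = x_4$. For the inductive step, abbreviate the error term in Lemma~\ref{20110411lem1} as $E_{m+1}$, so that $z_{m+1} = F(z_m) + E_{m+1}$. Assuming $x_n = z_n - \sum_{m=5}^n F^{n-m}(E_m)$, apply $F$ and then add $E_{n+1}$:
\[
F(x_n) + E_{n+1} = F(z_n) - \sum_{m=5}^n F^{n+1-m}(E_m) + E_{n+1} = z_{n+1} - \sum_{m=5}^{n+1} F^{n+1-m}(E_m),
\]
and the left side equals $x_{n+1}$, completing the induction.

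For the second equality, introduce the notation $G(u,m)$ for the $\mathcal{T}^{\geq u}(\mathcal{D}_m)\setminus \mathcal{T}^{\geq u+1}(\mathcal{D}_m)$ summand, so that $E_m = G(3,m)$. Iterating Lemma~\ref{20110411lem2} gives $F^{n-m}(G(3,m)) = G(n-m+3,\,n)$ (the hypotheses $u \geq 3$ and $n \geq u+2$ are preserved at each step because the range $m = 5,\ldots,n$ guarantees this). Reindexing the sum by $u = n-m+3$, which runs over $u = 3, 4, \ldots, n-2$ as $m$ runs over $n, n-1, \ldots, 5$, I obtain
\[
\sum_{m=5}^n F^{n-m}(E_m) = \sum_{u=3}^{n-2} G(u, n) = x_1^{-c_{n-1}} x_2^{-c_{n-2}}\sum_{\mathbf{\beta}\in\mathcal{T}^{\geq 3}(\mathcal{D}_n)}x_1^{r|\mathbf{\beta}|_1}x_2^{r(c_{n-1}-|\mathbf{\beta}|_2)},
\]
where the second equality uses Lemma~\ref{20110516eq2}'s emptiness of $\mathcal{T}^{\geq n-1}(\mathcal{D}_n)$ to conclude that the layers $\mathcal{T}^{\geq u}(\mathcal{D}_n)\setminus\mathcal{T}^{\geq u+1}(\mathcal{D}_n)$ for $u = 3,\ldots,n-2$ partition $\mathcal{T}^{\geq 3}(\mathcal{D}_n)$. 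Subtracting this from the definition (\ref{20110411z}) of $z_n$ and invoking the identity $\mathcal{F}(\mathcal{D}_n) = \widetilde{\mathcal{F}}(\mathcal{D}_n) \setminus \mathcal{T}^{\geq 3}(\mathcal{D}_n)$ from Lemma~\ref{20110516eq2} gives exactly the claimed expression.

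The main obstacle is fully absorbed in Lemmas~\ref{20110516eq2}--\ref{20110411lem2}: Lemma~\ref{20110411lem1} provides the one-step recursion with an explicit error, Lemma~\ref{20110411lem2} shows that $F$ transports each layer $\mathcal{T}^{\geq u}\setminus\mathcal{T}^{\geq u+1}$ cleanly to the next, and Lemma~\ref{20110516eq2} guarantees that the telescope terminates at $u = n-2$. Given these, the proof of Lemma~\ref{20110411lem3} is combinatorial bookkeeping; the only points requiring care are verifying the emptiness of $\mathcal{T}^{\geq 3}(\mathcal{D}_4)$ (which explains why the sum begins at $m=5$) and tracking the double index shift $(u,m) \mapsto (u+1, m+1)$ through the $F^{n-m}$ compositions so as to land precisely on the correct range $u \in \{3,\ldots,n-2\}$.
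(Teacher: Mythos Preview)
Your approach is essentially the paper's: induct on $n$ using $x_{n+1}=F(x_n)$ and Lemma~\ref{20110411lem1} for the first equality, then iterate Lemma~\ref{20110411lem2} and telescope via Lemma~\ref{20110516eq2} for the second. The only difference is organizational---you separate the two equalities, whereas the paper establishes both in a single inductive pass---and this is harmless.

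There is, however, a bookkeeping slip in your displayed inductive step. After applying $F$ and adding $E_{n+1}$ you get
\[
F(x_n)+E_{n+1}=F(z_n)+E_{n+1}-\sum_{m=5}^{n}F^{n+1-m}(E_m)=z_{n+1}-\sum_{m=5}^{n}F^{n+1-m}(E_m),
\]
with the sum ending at $n$, not $n+1$; and the left side equals $x_{n+1}+E_{n+1}$, not $x_{n+1}$. The two errors cancel, so your conclusion is correct, but the chain as written is not. The clean fix (which is what the paper does) is not to add $E_{n+1}$ at all: from $F(x_n)=F(z_n)-\sum_{m=5}^{n}F^{n+1-m}(E_m)$ and $F(z_n)=z_{n+1}-E_{n+1}$ one reads off $x_{n+1}=z_{n+1}-\sum_{m=5}^{n+1}F^{n+1-m}(E_m)$ directly. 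Your derivation of the second equality is correct as stated.
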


The proof of Lemma~\ref{20110411lem1} will be independent of those of Lemmas~\ref{20110411lem2} and \ref{20110411lem3}. We prove Lemmas~\ref{20110411lem2} and \ref{20110411lem3} by the following induction: 
\begin{equation}\label{globalinduction}
\aligned&[\text{Lemma~\ref{20110411lem2} holds true for }n\leq d] \Longrightarrow \text{[Lemma~\ref{20110411lem3} holds true for }n\leq d+1]\\
& \Longrightarrow [\text{Lemma~\ref{20110411lem2} holds true for }n\leq d+1] \Longrightarrow [\text{Lemma~\ref{20110411lem3} holds true for }n\leq d+2] \cdots.
\endaligned\end{equation}

\begin{proof}[Proof of Lemma~\ref{20110411lem3}]
We use induction on $n$. It is easy to show that $x_4=z_4$. Assume that (\ref{20110411lem3f}) holds for $n$.

Then $$\aligned 
x_{n+1}& =F(x_n)\\
&\overset{F : homomorphism}= F(z_n)-\sum_{m=5}^n F^{n-m+1}\left(x_1^{-c_{m-1}} x_2^{-c_{m-2}}\sum_{\mathbf{\beta}\in\mathcal{T}^{\geq 3}(\mathcal{D}_m)\setminus \mathcal{T}^{\geq 4}(\mathcal{D}_m)}x_1^{r|\mathbf{\beta}|_1}x_2^{r(c_{m-1}-|\mathbf{\beta}|_2)}\right)\\
&\overset{Lemma~\ref{20110411lem1}}= z_{n+1}-\sum_{m=5}^{n+1} F^{n-m+1}\left(x_1^{-c_{m-1}} x_2^{-c_{m-2}}\sum_{\mathbf{\beta}\in\mathcal{T}^{\geq 3}(\mathcal{D}_m)\setminus \mathcal{T}^{\geq 4}(\mathcal{D}_m)}x_1^{r|\mathbf{\beta}|_1}x_2^{r(c_{m-1}-|\mathbf{\beta}|_2)}\right)\\
&\overset{Lemma~\ref{20110411lem2}}= z_{n+1}-\sum_{m=5}^{n+1} x_1^{-c_{n}} x_2^{-c_{n-1}}\sum_{\mathbf{\beta}\in\mathcal{T}^{\geq n-m+4}(\mathcal{D}_{n+1})\setminus \mathcal{T}^{\geq n-m+5}(\mathcal{D}_{n+1})}x_1^{r|\mathbf{\beta}|_1}x_2^{r(c_{n}-|\mathbf{\beta}|_2)}\\
&= z_{n+1}-x_1^{-c_{n}} x_2^{-c_{n-1}}\sum_{\mathbf{\beta}\in\mathcal{T}^{\geq 3}(\mathcal{D}_{n+1})\setminus \mathcal{T}^{\geq n}(\mathcal{D}_{n+1})}x_1^{r|\mathbf{\beta}|_1}x_2^{r(c_{n}-|\mathbf{\beta}|_2)}\\
&\overset{Lemma~\ref{20110516eq2}}= z_{n+1}-x_1^{-c_{n}} x_2^{-c_{n-1}}\sum_{\mathbf{\beta}\in\mathcal{T}^{\geq 3}(\mathcal{D}_{n+1})}x_1^{r|\mathbf{\beta}|_1}x_2^{r(c_{n}-|\mathbf{\beta}|_2)}\\
&\overset{(\ref{20110411z})}= x_1^{-c_{n}} x_2^{-c_{n-1}}\sum_{\mathbf{\beta}\in\widetilde{\mathcal{F}}(\mathcal{D}_{n+1})\setminus\mathcal{T}^{\geq 3}(\mathcal{D}_{n+1})}x_1^{r|\mathbf{\beta}|_1}x_2^{r(c_{n}-|\mathbf{\beta}|_2)}\\
&\overset{(\ref{20110516eq})}=x_1^{-c_{n}} x_2^{-c_{n-1}}\sum_{\mathbf{\beta}\in{\mathcal{F}}(\mathcal{D}_{n+1})}x_1^{r|\mathbf{\beta}|_1}x_2^{r(c_{n}-|\mathbf{\beta}|_2)}.\endaligned$$
\end{proof}

In order to prove Lemma~\ref{20110411lem1}, we need the following notation. 

\begin{defn}
The sequence $\{b_{i,j}\}_{i\in \mathbb{Z}_{\geq 2}, 1\leq j\leq c_i}$ is defined by:
$$b_{i,j}=\left\{\begin{array}{ll} r, &\text{ if }\alpha_j\text{ is a horizontal edge of }\mathcal{D}_{i+1} \\
r-1,&\text{ if }\alpha_j\text{ is a vertical edge of }\mathcal{D}_{i+1}.    \end{array}   \right.$$
\qed\end{defn}

For integers $i\leq j$, we denote the set $\{i,i+1,i+2,\cdots,j\}$ by $[i,j]$. We will always identify $[i,j]$ with the subpath given by $(\alpha_i,\alpha_{i+1},\cdots,\alpha_j)$. 

\begin{defn}\label{def_of_f}
We will need a function $f$ from $\{\text{subsets of } [1,c_{n-1}]\}$ to $\{\text{subsets of } [1,c_{n}]\}$. For each subset $V\subset [1,c_{n-1}]$, we define $f(V)$ as follows.

If $V=\emptyset$ then $f(\emptyset)=\emptyset$. If $V\neq\emptyset$ then we write $V$ as a disjoint union of maximal connected subsets  $ V=\sqcup_{i=1}^j [e_i,e_i+\ell_i-1]$ with $\ell_i>0$ $(1\leq i\leq j)$ and $e_i+\ell_i<e_{i+1}$ $(1\leq i\leq j-1)$. For each $1\leq i\leq j$, let $$W_i=[1+\sum_{k=1}^{e_i-1}b_{n-1,k},\, \sum_{k=1}^{e_i+\ell_i-1}b_{n-1,k}]$$ and define $f_i(V)$ by
$$
f_i(V):=\left\{ \begin{array}{ll}  
W_i,  &\begin{array}{l}\text{if the subpath given by }W_i\text{ is blue or green;} \end{array}\\
 \text{ } & \text{ }\\
\{\sum_{k=1}^{e_i-1}b_{n-1,k}\}\cup W_i, & \text{ otherwise.} \end{array}  \right.
$$
Then $f(V)$ is obtained by taking the union of $f_i(V)$'s:
$$
f(V):=\cup_{i=1}^j f_i(V).
$$Note that the subpath given by $f_i(V)$ is always one of blue, green, or red subpaths, and that every blue, green, or red subpath can be realized as the image of a maximal connected interval under $f$.
\qed\end{defn}

\begin{exmp}
Let $r=3$ and $n=5$. Then $f(\{4,5,6\})=\{9,10,11,12,13,14,15,16\}.$ As illustrated below, the image of the subpath $(\alpha_4,\alpha_5,\alpha_6)$ under $f$ is the subpath $(\alpha_9,\cdots,\alpha_{16}),$ which is blue.

$$\begin{picture}(140,60)
\boxs{0,60}\boxs{20,60}\boxs{40,60}\boxs{60,60}\boxs{80,60}
\boxs{0,80}\boxs{20,80}\boxs{40,80}\boxs{60,80}\boxs{80,80}
\boxs{0,40}\boxs{20,40}\boxs{40,40}\boxs{60,40}\boxs{80,40}
\linethickness{1pt}\put(0,40){\line(5,3){100}}
\linethickness{3pt}\put(40,60){\line(1,0){40}}
\linethickness{3pt}\put(80,60){\line(0,1){20}}\put(115,60){$\Huge{\overset{f}\mapsto}$}
\end{picture} \begin{picture}(260,140)
\boxs{0,0}\boxs{20,0}\boxs{40,0}\boxs{60,0}\boxs{80,0}
\boxs{100,0}\boxs{120,0}\boxs{140,0}\boxs{160,0}\boxs{180,0}
\boxs{200,0}\boxs{220,0}\boxs{240,0}
\boxs{0,20}\boxs{20,20}\boxs{40,20}\boxs{60,20}\boxs{80,20}
\boxs{100,20}\boxs{120,20}\boxs{140,20}\boxs{160,20}\boxs{180,20}
\boxs{200,20}\boxs{220,20}\boxs{240,20}
\boxs{0,40}\boxs{20,40}\boxs{40,40}\boxs{60,40}\boxs{80,40}
\boxs{100,40}\boxs{120,40}\boxs{140,40}\boxs{160,40}\boxs{180,40}
\boxs{200,40}\boxs{220,40}\boxs{240,40}
\boxs{0,60}\boxs{20,60}\boxs{40,60}\boxs{60,60}\boxs{80,60}
\boxs{100,60}\boxs{120,60}\boxs{140,60}\boxs{160,60}\boxs{180,60}
\boxs{200,60}\boxs{220,60}\boxs{240,60}
\boxs{0,80}\boxs{20,80}\boxs{40,80}\boxs{60,80}\boxs{80,80}
\boxs{100,80}\boxs{120,80}\boxs{140,80}\boxs{160,80}\boxs{180,80}
\boxs{200,80}\boxs{220,80}\boxs{240,80}
\boxs{0,100}\boxs{20,100}\boxs{40,100}\boxs{60,100}\boxs{80,100}
\boxs{100,100}\boxs{120,100}\boxs{140,100}\boxs{160,100}\boxs{180,100}
\boxs{200,100}\boxs{220,100}\boxs{240,100}
\boxs{0,120}\boxs{20,120}\boxs{40,120}\boxs{60,120}\boxs{80,120}
\boxs{100,120}\boxs{120,120}\boxs{140,120}\boxs{160,120}\boxs{180,120}
\boxs{200,120}\boxs{220,120}\boxs{240,120}
\boxs{0,140}\boxs{20,140}\boxs{40,140}\boxs{60,140}\boxs{80,140}
\boxs{100,140}\boxs{120,140}\boxs{140,140}\boxs{160,140}\boxs{180,140}
\boxs{200,140}\boxs{220,140}\boxs{240,140}
\linethickness{3pt}\put(0,2){\line(5,3){260}}
\linethickness{3pt}\color{blue}\put(100,60){\line(1,0){40}}
\linethickness{3pt}\put(140,60){\line(0,1){20}}\put(140,80){\line(1,0){40}}\put(180,80){\line(0,1){20}}\put(180,100){\line(1,0){20}}\put(200,100){\line(0,1){20}}
\end{picture}$$
\end{exmp}

\begin{defn}
Same notation as in Definition~\ref{def_of_f}. For each $1\leq i\leq j$, we define $\delta_{[e_i,e_i+\ell_i-1]}$ by
$$
\left\{\begin{array}{l}1,\,\,\text{ if the }w\text{-th edge in the subpath corresponding }\\
\,\,\,\,\,\,\,\,\,\,\text{ to }[e_i,e_i+\ell_i-1]\text{ is vertical for some }2\leq w\leq r-1;\\
0,\,\,\text{ otherwise}.\end{array}\right.
$$
Let $\delta_V:=\sum_{i=1}^j\delta_{[e_i,e_i+\ell_i-1]}$.
\qed\end{defn}

\begin{defn}[pull-back]
Let $n>1$ be any integer. For any subset $W\subset [1,c_n]$, the pull-back $f^*(W)$ by $f$ is defined as follows:
$$f^*(W)=\cup_{V : f(V)\subset W} V\subset [1,c_{n-1}].$$
\qed\end{defn}

The next lemma will be needed to prove Lemma~\ref{20110411lem1}. 

\begin{lem}\label{thenumofelem}
Let $V$ be any subset of $[1,c_{n-1}]$. Then
$$|f(V)|=r|V|-|f^*(V)|-\delta_V.$$
\end{lem}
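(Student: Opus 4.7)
First, partition $V$ into its maximal connected components $V = \bigsqcup_{i=1}^j [e_i, e_i+\ell_i-1]$ and compute $|f(V)|$ directly. Let $v_i$ denote the number of vertical edges of $\mathcal{D}_n$ among $\alpha_{e_i},\ldots,\alpha_{e_i+\ell_i-1}$. Since $b_{n-1,k}=r$ for horizontal and $b_{n-1,k}=r-1$ for vertical edges, we have $|W_i|=r\ell_i-v_i$ and so
$$|f_i(V)|=r\ell_i-v_i+\epsilon_i,\qquad \epsilon_i=\begin{cases}1&\text{if $W_i$ is red,}\\ 0&\text{if $W_i$ is blue or green,}\end{cases}$$
whence $|f(V)|=r|V|-\sum_i v_i+\sum_i\epsilon_i$. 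The lemma therefore reduces to the combinatorial identity
$$|f^*(V)|+\delta_V=\sum_i v_i-\sum_i\epsilon_i. \qquad(\star)$$

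The next step is to interpret $f^*(V)$ via the refinement $\mathcal{D}_{n-1}\rightsquigarrow\mathcal{D}_n$. Unwinding Definition~\ref{def_of_f} one level lower, an index $k\in[1,c_{n-2}]$ lies in $f^*(V)$ exactly when the full block of $b_{n-2,k}\in\{r-1,r\}$ edges of $\mathcal{D}_n$ into which $\alpha_k$ expands (together with its predecessor edge when that block forms a red subpath) is contained in $V$. Under this description, $f^*(V)$ is in natural bijection with the complete blocks of $\mathcal{D}_n$ absorbed by $V$, and each such block ends in a vertical edge of $\mathcal{D}_n$.

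The heart of the argument is then a local count within each maximal component. For fixed $i$, I would classify the vertical edges of $\mathcal{D}_n$ inside $[e_i, e_i+\ell_i-1]$ into three disjoint classes: (a) those ending a complete block contained in the component, which correspond bijectively to the elements of $f^*(V)$ contributed here; (b) those occupying an interior position $2\le w\le r-1$ of a block straddling the boundary of the component, which are precisely the vertical edges triggering $\delta_{[e_i,e_i+\ell_i-1]}$; and (c) at most one ``stray'' vertical edge at the front of the component that is absorbed by the red prepending, contributing $\epsilon_i$. The colour of $W_i$, read off from Definition~\ref{alpha(i,k)} through the slope comparisons $s_{*,*}$ versus $s$, determines exactly which of these three cases occur for each edge, yielding the per-component identity $v_i-\epsilon_i=|f^*(V)\cap(\text{block}_i)|+\delta_{[e_i,e_i+\ell_i-1]}$. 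Summing over $i$ then proves $(\star)$.

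The main obstacle I anticipate is executing this trichotomy rigorously: one must confirm that the red correction $\epsilon_i$ cancels precisely one stray vertical edge (and never an entire block); that no vertical edge is simultaneously counted by $\delta_V$ and by $f^*(V)$; and that the ``middle position'' condition defining $\delta$ coincides exactly with the block-straddling geometry. These assertions rest on the recursive Christoffel-word structure of $\mathcal{D}_n$ induced by $c_n=r\,c_{n-1}-c_{n-2}$ and on the controlled-overshoot condition $c_m-w\,c_{m-1}$ defining the green subpaths in Definition~\ref{alpha(i,k)}.
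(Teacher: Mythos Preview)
Your approach is essentially the same as the paper's: both localize to a single maximal interval, compute $|f_i(V)|=r\ell_i-v_i+\epsilon_i$ from the definition of $b_{n-1,k}$, and then establish the per-interval identity $v_i-\epsilon_i=|f^*(V)\cap(\text{this interval})|+\delta_{[e_i,e_i+\ell_i-1]}$ by casework on the colour of $W_i$. The paper's proof is just as terse about the ``obstacles'' you name---it simply asserts, in each of the three cases (blue/green with $\delta=0$; red; green with $\delta=1$), that the number of vertical edges equals $|f^*(V)|$, $|f^*(V)|+1$, $|f^*(V)|+1$ respectively, saying only ``it is not hard to show''---so your level of detail already matches the published argument.
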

\begin{proof}
The question is local, so we may assume that $V$ is an interval.
First, assume that $V$ is an interval such that the $(r-w)$-th edge in the corresponding subpath, denoted also by $V$, is not vertical for any $1\leq w\leq r-2$. Observe that $|f(V)|$ is determined by how many horizontal edges and vertical edges are in $V$ and whether the subpath given by $f(V)$ is blue/green or red. Suppose that the subpath given by $f(V)$ is blue or green. Then $|f(V)|=r|V|-($the number of vertical edges in $V)$. Also if the subpath given by $f(V)$ is blue or green, then (the number of vertical edges in $V)=|f^*(V)|$, because the $(w+1)-$th edge in $V$ is not vertical for any $1\leq w\leq r-2$. 

Suppose that the subpath given by $f(V)$ is red. Then $|f(V)|=r|V|-($the number of vertical edges in $V)+1$. It is not hard to show that (the number of vertical edges in $V)=|f^*(V)|+1$.

If $V$ is  an interval such that the $(r-w)$-th edge is vertical for some $1\leq w\leq r-2$, then $f(V)$ is green. Then $|f(V)|=r|V|-($the number of vertical edges in $V)$, and (the number of vertical edges in $V)=|f^*(V)|+1$. 
\end{proof}

The binomial coefficients we will use are generalized binomial coefficients, i.e., for any (possibly negative) integers $A,B$, 
$${A\choose B} := \left\{ \begin{array}{ll}  \frac{\prod_{i=0}^{B-1} (A-i)}{B!}, & \text{ if } B>0\\ \, & \,  \\   1, & \text{ if }B=0 \\ \, & \, \\  0, & \text{ if }B<0.  \end{array}  \right.$$

\begin{proof}[Proof of Lemma~\ref{20110411lem1}]
$$\aligned
F(z_{n})&=x_2^{-c_{n-1}} \sum_{\mathbf{\beta}\in\widetilde{\mathcal{F}}(\mathcal{D}_n)} x_2^{r|\mathbf{\beta}|_1}\left(\frac{x_2^r+1}{x_1}\right)^{r(c_{n-1}-|\mathbf{\beta}|_2)-c_{n-2}}\\
&=x_1^{-c_n}x_2^{-c_{n-1}}\sum_{\mathbf{\beta}\in\widetilde{\mathcal{F}}(\mathcal{D}_n)}x_1^{r|\mathbf{\beta}|_2}x_2^{r|\mathbf{\beta}|_1}\left(x_2^r+1\right)^{c_n-r|\mathbf{\beta}|_2}
\\
&=x_1^{-c_n}x_2^{-c_{n-1}}\sum_{\mathbf{\beta}\in\widetilde{\mathcal{F}}(\mathcal{D}_n)} \sum_{\theta\in\mathbb{Z}}{{c_{n}-r|\mathbf{\beta}|_2}\choose {\theta}}x_1^{r|\mathbf{\beta}|_2}x_2^{r|\mathbf{\beta}|_1}x_2^{r\theta}\\
&=x_1^{-c_n}x_2^{-c_{n-1}} \sum_{V\subset[1,c_{n-1}]}\,\,\sum_{\beta:\cup\beta_i=V}\sum_{\theta\in\mathbb{Z}}{{c_{n}-r|V|}\choose {\theta}}x_1^{r|V|}x_2^{r|\mathbf{\beta}|_1}x_2^{r\theta}.\,\,\,\cdots\cdots\cdots(\dag\dag)\endaligned$$

For a given $V\subset [1,c_{n-1}]$, consider any subset, say $U$, of $f^*(V)$. Then the image of each  maximal connected subset of $U$  under $f$ is a blue, green, or red subpath. So $U$ uniquely determines $\beta$, namely,
$$\beta=\{f(\text{all maximal connected subsets of }U)\} \cup \{\text{all individual edges in }V\setminus f(U)\}.
$$Conversely,  $\beta$ with $\cup\beta_i=V$ uniquely determines a subset $U$ of $f^*(V)$, that is, $$U=f^{-1}(\text{all blue, green, or red subpaths in }\beta),$$
because $f^{-1}(\text{a blue, green, or red subpath})$ is well-defined and no subpath of $V\setminus f(f^*(V))$ can be a blue, green, or red subpath.

Since $|\beta|_1=|U|$, we can see that $(\dag\dag)$ becomes
$$\aligned
F(z_{n})&=x_1^{-c_n}x_2^{-c_{n-1}} \sum_{V\subset[1,c_{n-1}]}\,\,\sum_{U\subset f^*(V)}\sum_{\theta\in\mathbb{Z}}{{c_{n}-r|V|}\choose {\theta}}x_1^{r|V|}x_2^{r|U|}x_2^{r\theta}\\
&=x_1^{-c_n}x_2^{-c_{n-1}} \sum_{V\subset[1,c_{n-1}]}\,\,\sum_{\rho\in\mathbb{Z}}\,\,\sum_{\theta\in\mathbb{Z}}{{c_{n}-r|V|}\choose {\theta}}{{|f^*(V)|}\choose \rho}x_1^{r|V|}x_2^{r(\theta+\rho)}\\
&\overset{(**)}=x_1^{-c_n}x_2^{-c_{n-1}} \sum_{V\subset[1,c_{n-1}]}\,\,\sum_{\gamma\in\mathbb{Z}}{{c_{n}-r|V|+|f^*(V)|}\choose \gamma}x_1^{r|V|}x_2^{r\gamma}\\
&=x_1^{-c_n}x_2^{-c_{n-1}} \sum_{V\subset[1,c_{n-1}]}\,\,\sum_{\gamma\in\mathbb{Z}}{{c_{n}-r|V|+|f^*(V)|}\choose {c_{n}-r|V|+|f^*(V)|-\gamma}}x_1^{r|V|}x_2^{r(c_{n}-r|V|+|f^*(V)|-\gamma)}\\
&\overset{Lemma~\ref{thenumofelem}}=x_1^{-c_n}x_2^{-c_{n-1}} \sum_{V\subset[1,c_{n-1}]}\,\,\sum_{\gamma\in\mathbb{Z}}{{c_{n}-|f(V)|-\delta_V}\choose {c_{n}-|f(V)|-\delta_V-\gamma}}x_1^{r|V|}x_2^{r(c_{n}-|f(V)|-\delta_V-\gamma)}\\
&=x_1^{-c_n}x_2^{-c_{n-1}} \sum_{V\subset[1,c_{n-1}]}\,\,\sum_{h\in\mathbb{Z}}{{c_{n}-|f(V)|-\delta_V}\choose {c_{n}-|f(V)|-h}}x_1^{r|V|}x_2^{r(c_{n}-|f(V)|-h)}\\
&=x_1^{-c_n}x_2^{-c_{n-1}} \sum_{V\subset[1,c_{n-1}]}\,\,\sum_{h\in\mathbb{Z}}{{c_{n}-|f(V)|-\delta_V}\choose {h-\delta_V}}x_1^{r|V|}x_2^{r(c_{n}-|f(V)|-h)},\,\,\,\,\cdots\cdots\cdots(\ddag)
\endaligned$$
where $(**)$ follows from the Chu--Vandermonde identity. 

Next we analyze $z_{n+1}$. If $\mathbf{\beta}\in\widetilde{\mathcal{F}}(\mathcal{D}_{n+1})$, then there exist 
\begin{equation}\label{05192011eq}\left\{\aligned
&e\in \mathbb{Z}_{\geq 0},\\
&0\leq i_1<k_1<i_2<k_2<\cdots<i_e<k_e\leq c_{n-1},\\
&h \in \mathbb{Z}_{\geq 0},\text{ and }\\
&{j_1}<\cdots<{j_h}\in [1,c_n]\setminus  f(\sqcup_{\ell=1}^e [1+i_\ell,k_\ell])
\endaligned\right.\end{equation}such that $$\beta=\{\alpha(i_1,k_1),\cdots,\alpha(i_e,k_e)\}\cup\{\alpha_{j_1},\cdots, \alpha_{j_h}\}.$$ For a given $\mathbf{\beta}\in\widetilde{\mathcal{F}}(\mathcal{D}_{n+1})$, this expression is unique.
Conversely $(\ref{05192011eq})$ uniquely determines an element $\beta$ in $\widetilde{\mathcal{F}}(\mathcal{D}_{n+1})$. Note that $|\mathbf{\beta}|_1=\sum_{\ell=1}^e (k_\ell-i_\ell)$ and $|\mathbf{\beta}|_2=|f(\sqcup_{\ell=1}^e [1+i_\ell,k_\ell])|+h$.

So we have
$$\aligned z_{n+1}&=x_1^{-c_{n}} x_2^{-c_{n-1}}\sum_{\mathbf{\beta}\in\widetilde{\mathcal{F}}(\mathcal{D}_{n+1})}x_1^{r|\mathbf{\beta}|_1}x_2^{r(c_{n}-|\mathbf{\beta}|_2)}\\
&=x_1^{-c_{n}} x_2^{-c_{n-1}}\sum_{e\in \mathbb{Z}_{\geq 0}}\,\, \sum_{1\leq i_1<\cdots<k_e\leq c_{n-1}}\,\, \sum_{h \in \mathbb{Z}_{\geq 0}}\,\,\sum_{{j_1}<\cdots<{j_h}\in [1,c_n]\setminus  f(\sqcup_\ell [1+i_\ell,k_\ell])}
x_1^{r\sum_{l} (k_\ell-i_\ell)}x_2^{r(c_{n}-|f(\sqcup_\ell [1+i_\ell,k_\ell])|-h)}\\
&=x_1^{-c_n}x_2^{-c_{n-1}} \sum_{V\subset[1,c_{n-1}]}\,\,\sum_{h \in \mathbb{Z}_{\geq 0}}\,\,\sum_{{j_1}<\cdots<{j_h}\in [1,c_n]\setminus  f(V)}
x_1^{r|V|}x_2^{r(c_{n}-|f(V)|-h)}\\
&=x_1^{-c_n}x_2^{-c_{n-1}} \sum_{V\subset[1,c_{n-1}]}\,\,\sum_{h \in \mathbb{Z}}\,\,{{c_{n}-|f(V)|}\choose {h}} x_1^{r|V|}x_2^{r(c_{n}-|f(V)|-h)}\\
&\overset{\text{by }(\ddag)}=F(z_{n})+x_1^{-c_n}x_2^{-c_{n-1}} \sum_{V\subset[1,c_{n-1}]}\,\,\sum_{h \in \mathbb{Z}}\,\,\left({{c_{n}-|f(V)|}\choose {h}}-{{c_{n}-|f(V)|-\delta_V}\choose {h-\delta_V}}\right) x_1^{r|V|}x_2^{r(c_{n}-|f(V)|-h)}\\
&=F(z_{n})+x_1^{-c_{n}} x_2^{-c_{n-1}}\sum_{\mathbf{\beta}\in\mathcal{T}^{\geq 3}(\mathcal{D}_{n+1})\setminus \mathcal{T}^{\geq 4}(\mathcal{D}_{n+1})}x_1^{r|\mathbf{\beta}|_1}x_2^{r(c_{n}-|\mathbf{\beta}|_2)},
\endaligned$$where the last equality is a consequence of the next Lemma. 
\end{proof}

\begin{lem}\label{05212011lem}
Let $V$ be any subset of $[1,c_{n-1}]$ and $h$ be any integer. Then the number of elements $\beta$ in $\widetilde{\mathcal{F}}(\mathcal{D}_{n+1})$ satisfying \begin{equation}\label{05212011eq02}\left\{\aligned &f(V)\text{ is the union of all blue, green, and red subpaths in }\beta,\,\\
&|\mathbf{\beta}|_2=|f(V)|+h,\text{ and }\\
&\mathbf{\beta}\in\mathcal{T}^{\geq 3}(\mathcal{D}_{n+1})\setminus \mathcal{T}^{\geq 4}(\mathcal{D}_{n+1})\endaligned\right.\end{equation} is equal to
\begin{equation}\label{05212011eq03}{{c_{n}-|f(V)|}\choose {h}}-{{c_{n}-|f(V)|-\delta_V}\choose {h-\delta_V}}.\end{equation} 
\end{lem}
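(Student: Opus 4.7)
The plan is to count the $\beta\in\widetilde{\mathcal{F}}(\mathcal{D}_{n+1})$ satisfying the first two conditions in (\ref{05212011eq02}) as a total, then subtract those that fail the third condition. The first two conditions completely determine the blue, green, and red subpath content of $\beta$ as exactly $\{f_1(V),\ldots,f_j(V)\}$, so the remaining part of $\beta$ consists of $h$ single edges chosen freely from the $c_n-|f(V)|$ positions in $[1,c_n]\setminus f(V)$. This identification immediately yields the first binomial ${c_n-|f(V)| \choose h}$ as the total count.

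To interpret the subtracted term, I would pass to a careful analysis of each maximal connected interval $[e_i,e_i+\ell_i-1]$ in the decomposition of $V$. The key combinatorial input is that the violating condition in the definition of $\mathcal{T}^{\geq u}$ depends, for each $(m,w)$-green $f_i(V)$, on whether any of the $(c_{m-1}-wc_{m-2})$ preceding edges of the corresponding $v_{i}$ meets the chosen single-edge set $S=\beta\setminus f(V)$. The quantity $\delta_{[e_i,e_i+\ell_i-1]}$ is engineered via the condition "vertical edge in position $w$ with $2\leq w\leq r-1$" precisely to record when $f_i(V)$ is a green subpath whose violation status (at level $m=3$ versus $m\geq 4$) is controlled by one distinguished preceding single edge.

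The claim is that the "bad" configurations---those satisfying the first two conditions of (\ref{05212011eq02}) but failing $\beta\in\mathcal{T}^{\geq 3}(\mathcal{D}_{n+1})\setminus \mathcal{T}^{\geq 4}(\mathcal{D}_{n+1})$---are exactly those $S$ that include each of the $\delta_V$ distinguished preceding edges. Fixing these $\delta_V$ positions as mandatory inclusions leaves $h-\delta_V$ free choices from $c_n-|f(V)|-\delta_V$ remaining positions, giving ${c_n-|f(V)|-\delta_V \choose h-\delta_V}$ bad configurations; subtracting then produces (\ref{05212011eq03}).

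The hard part will be verifying this last claim rigorously: that $\delta_V$ counts exactly the intervals on which the $m=3$ versus $m\geq 4$ toggle occurs, that the distinguished preceding edge in each such interval is a single edge and uniquely determined, and that the equivalence "bad $\Longleftrightarrow$ all $\delta_V$ distinguished edges lie in $S$" actually holds. This should follow by a case analysis on the color and parameters of each $f_i(V)$, using the recursion $c_{m+1}=rc_m-c_{m-1}$ and Lemma~\ref{20110516eq2} to bound the admissible $m$, together with the explicit combinatorics of how $f$ transfers intervals of $\mathcal{D}_n$ to subpaths of $\mathcal{D}_{n+1}$.
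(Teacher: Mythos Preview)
Your proposal is correct and follows essentially the same approach as the paper's proof: both argue by complementary counting, identifying the total count of $\beta$ satisfying the first two conditions as $\binom{c_n-|f(V)|}{h}$ and then subtracting the ``bad'' configurations, which are characterized (via the equivalence you call the ``hard part'' and the paper records as (\ref{05212011eq01})) as those in which each of the $\delta_V$ single preceding edges of the $(3,w)$-green subpaths is included among the chosen single edges. The paper treats the cases $\delta_V=0$ and $\delta_V=1$ explicitly and dismisses the general case with ``similarly'', so your write-up is in fact somewhat more explicit about the structure of the argument and about what must be checked.
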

\begin{proof}
We remember the definition of $(m,w)$-green subpaths  in Definition~\ref{alpha(i,k)} (2-a), especially for $m=3$. For an interval $[i,k]\subset [1,c_{n-1}]$, the subpath in $\mathcal{D}_n$ corresponding to $[i+1,i+r-1]\cap[i,k]$ contains a vertical edge if and only if the slope in $\mathcal{D}_{n+1}$ between the point $v_i$ and the point corresponding to the upper endpoint of the vertical edge is as large as possible, more precisely,
\begin{equation}\label{05212011eq01}\text{ the subpath in }\mathcal{D}_{n+1}\text{ corresponding to }f([i,k])\text{ is }(3,w)\text{-green for some }w\in[1,r-2].\end{equation}

If no maximal connected interval of $V$ satisfies $(\ref{05212011eq01})$, then $\delta_V=0$ and there is no $\beta$ satisfying (\ref{05212011eq02}), so the statement holds true. If only one maximal connected interval, say $[i,k]$, of $V$ satisfies $(\ref{05212011eq01})$, then $\delta_V=1$. On the other hand, since $$\mathbf{\beta}\in\mathcal{T}^{\geq 3}(\mathcal{D}_{n+1})\setminus \mathcal{T}^{\geq 4}(\mathcal{D}_{n+1}),$$ none of the $(c_{2}-wc_{1})$ preceding edge(s) of $v_i$ is contained in any element $\beta_{j'}$ of $\beta$. As $c_{2}-wc_{1}=1$, the number of $\beta$ satisfying (\ref{05212011eq02}) is obtained by subtracting the number of sequences ${j_1}<\cdots<{j_h}\in [1,c_n]\setminus  f(V)$ with $\left(\min f([i,k])-1\right)\in\{j_1,\cdots,j_h\}$ from the number of sequences ${j_1}<\cdots<{j_h}\in [1,c_n]\setminus  f(V)$, which is ${{c_{n}-|f(V)|}\choose {h}}-{{c_{n}-|f(V)|-1}\choose {h-1}}$.

Similarly one can verify the statement in the case that more than one maximal connected intervals of $V$ satisfy $(\ref{05212011eq01})$.
\end{proof}

It remains to prove Lemma~\ref{20110411lem2}.

\begin{proof}[Sketch of Proof of Lemma~\ref{20110411lem2}]
Here we will deal only with the case of $n=u+2$. The case of $n>u+2$ makes use of the same argument. As we use the induction (\ref{globalinduction}), we can assume that $$x_i=x_1^{-c_{i-1}} x_2^{-c_{i-2}}\sum_{\mathbf{\beta}\in\mathcal{F}(\mathcal{D}_i)}x_1^{r|\mathbf{\beta}|_1}x_2^{r(c_{i-1}-|\mathbf{\beta}|_2)}$$for $i\leq n$. 

For any $w\in [1,r-2]$, it is easy to show that the lattice point
$(w(c_{n-2}-c_{n-3}), wc_{n-3})$ is below the diagonal from $(0,0)$ to $(c_{n-1}-c_{n-2},c_{n-2})$ and that the points $(w(c_{n-2}-c_{n-3}), 1+wc_{n-3})$ and $(w(c_{n-2}-c_{n-3})-1, wc_{n-3})$ are above the diagonal. So $(w(c_{n-2}-c_{n-3}), wc_{n-3})$ is one of the vertices $v_i$ on $\mathcal{D}_n$. Actually $v_{wc_{n-3}}=(w(c_{n-2}-c_{n-3}), wc_{n-3})$. Since $u=n-2$ and $\alpha(wc_{n-3},c_{n-2})$ is the only $(n-2,w)$-green subpath in $\{\alpha(i,k)\,|\, 0 \leq i < k \leq c_{n-2}\}$, every $\mathbf{\beta}\in\mathcal{T}^{\geq u}(\mathcal{D}_n)\setminus \mathcal{T}^{\geq u+1}(\mathcal{D}_n)$ must contain the green subpath from $v_{wc_{n-3}}$ to $v_{c_{n-2}}$. Then none of the 
$c_{n-3}-wc_{n-4}$ preceding edges of $v_{wc_{n-3}}$ is contained in any element $\beta_{j'}$ of $\beta$. The green subpath from $v_{wc_{n-3}}$ to $v_{c_{n-2}}$ corresponds to the interval $[wc_{n-2}+1,c_{n-1}] \subset [1,c_{n-1}]$. The 
$c_{n-3}-wc_{n-4}$ preceding edges of $v_{wc_{n-3}}$ are $\alpha_{(rw-1)c_{n-3}+1},\cdots, \alpha_{wc_{n-2}}$.

Thus we have
$$\aligned 
&x_1^{-c_{n-1}} x_2^{-c_{n-2}}\sum_{\mathbf{\beta}\in\mathcal{T}^{\geq u}(\mathcal{D}_n)\setminus \mathcal{T}^{\geq u+1}(\mathcal{D}_n)}x_1^{r|\mathbf{\beta}|_1}x_2^{r(c_{n-1}-|\mathbf{\beta}|_2)}\\
&=\sum_{w=1}^{r-2} x_1^{-c_{n-1}} x_2^{-c_{n-2}}\sum_{V\subset [1,(rw-1)c_{n-3}]}\,\,\sum_{\mathbf{\beta}: \cup\beta_i=V\cup[wc_{n-2}+1,c_{n-1}],\,\mathbf{\beta}\ni \alpha(wc_{n-3},c_{n-2})}x_1^{r|\mathbf{\beta}|_1}x_2^{r(c_{n-1}-|\mathbf{\beta}|_2)}.\,\,\,\,\,\,\,\,(*)\endaligned$$

We observe that the subpath corresponding to $[1,(rw-1)c_{n-3}]$ consists of $(w-1)$ copies of $\mathcal{D}_{n-1}$, $(r-1)$ copies of $\mathcal{D}_{n-2}$, and $(w-1)$ copies of $\mathcal{D}_{n-3}$. Let $v_{j_0}=(0,0)$ and $v_{j_i}$ be the end point of each of these copies, i.e., $$\aligned &v_{j_i}=v_{ic_{n-3}}\text{ for }1\leq i\leq w-1,\\
& v_{j_{w-1+i}}=v_{(w-1)c_{n-3}+ic_{n-4}}\text{ for }1\leq i\leq r-1,\\
& v_{j_{w+r-2+i}}=v_{(w-1)c_{n-3}+(r-1)c_{n-4}+ic_{n-5}}\text{ for }1\leq i\leq w-1.\endaligned$$
If a $(m,w')$-green (resp. blue or red) subpath, say $\alpha(i,k)$, in $[1,(rw-1)c_{n-3}]$ passes through $v_{j_e},v_{j_{e+1}},\cdots, v_{j_{e+\ell}}$, then $\alpha(i,k)$ can be naturally decomposed into $\alpha(i,j_e)$, $\alpha(j_e,j_{e+1})$, $\cdots$,  $\alpha(j_{e+\ell},k)$. It is not hard to show that $\alpha(i,j_e)$ is also $(m,w')$-green (resp. blue or red) and that 
 $\alpha(j_e,j_{e+1})$, $\cdots$,  $\alpha(j_{e+\ell},k)$ are all blue.

Hence 
$$\aligned
&(*)=\sum_{w=1}^{r-2} x_1^{-c_{n-1}} x_2^{-c_{n-2}}\left(\sum_{\mathbf{\beta}\in\mathcal{F}(\mathcal{D}_{n-1})}x_1^{r|\mathbf{\beta}|_1}x_2^{r(c_{n-2}-|\mathbf{\beta}|_2)}\right)^{w-1}\left(\sum_{\mathbf{\beta}\in\mathcal{F}(\mathcal{D}_{n-2})}x_1^{r|\mathbf{\beta}|_1}x_2^{r(c_{n-3}-|\mathbf{\beta}|_2)}\right)^{r-1}\\
&\,\,\,\,\,\,\,\,\,\,\,\,\,\,\,\,\times\left(\sum_{\mathbf{\beta}\in\mathcal{F}(\mathcal{D}_{n-3})}x_1^{r|\mathbf{\beta}|_1}x_2^{r(c_{n-4}-|\mathbf{\beta}|_2)}\right)^{w-1}x_1^{r(c_{n-2}-wc_{n-3})}x_2^{r(c_{n-3}-wc_{n-4})}\\
&=\sum_{w=1}^{r-2} x_1^{-c_{n-1}} x_2^{-c_{n-2}}\left(x_{n-1}x_1^{c_{n-2}}x_2^{c_{n-3}}\right)^{w-1}\left(x_{n-2}x_1^{c_{n-3}}x_2^{c_{n-4}}\right)^{r-1}\\
&\,\,\,\,\,\,\,\,\,\,\,\,\,\,\,\,\times\left(x_{n-3}x_1^{c_{n-4}}x_2^{c_{n-5}}\right)^{w-1}x_1^{r(c_{n-2}-wc_{n-3})}x_2^{r(c_{n-3}-wc_{n-4})}\\
&=\sum_{w=1}^{r-2} (x_{n-1})^{w-1}(x_{n-2})^{r-1}(x_{n-3})^{w-1}.
\endaligned$$For the same reason, we get $$x_1^{-c_{n}} x_2^{-c_{n-1}}\sum_{\mathbf{\beta}\in\mathcal{T}^{\geq u+1}(\mathcal{D}_{n+1})\setminus \mathcal{T}^{\geq u+2}(\mathcal{D}_{n+1})}x_1^{r|\mathbf{\beta}|_1}x_2^{r(c_{n}-|\mathbf{\beta}|_2)}=\sum_{w=1}^{r-2} (x_{n})^{w-1}(x_{n-1})^{r-1}(x_{n-2})^{w-1}.$$
Therefore, we have
$$\aligned 
&F\left(x_1^{-c_{n-1}} x_2^{-c_{n-2}}\sum_{\mathbf{\beta}\in\mathcal{T}^{\geq u}(\mathcal{D}_n)\setminus \mathcal{T}^{\geq u+1}(\mathcal{D}_n)}x_1^{r|\mathbf{\beta}|_1}x_2^{r(c_{n-1}-|\mathbf{\beta}|_2)}\right)=F\left(\sum_{w=1}^{r-2} (x_{n-1})^{w-1}(x_{n-2})^{r-1}(x_{n-3})^{w-1}\right)\\
&=\sum_{w=1}^{r-2} (x_{n})^{w-1}(x_{n-1})^{r-1}(x_{n-2})^{w-1}=x_1^{-c_{n}} x_2^{-c_{n-1}}\sum_{\mathbf{\beta}\in\mathcal{T}^{\geq u+1}(\mathcal{D}_{n+1})\setminus \mathcal{T}^{\geq u+2}(\mathcal{D}_{n+1})}x_1^{r|\mathbf{\beta}|_1}x_2^{r(c_{n}-|\mathbf{\beta}|_2)}.\endaligned$$
 \end{proof}

\end{document}